\RequirePackage[T1]{fontenc}
\documentclass[preprint,twoside,12pt]{amsart}

\usepackage{amsfonts}
\usepackage{amsmath}
\usepackage{amsthm}
\usepackage{amssymb}
\usepackage{bbm}
\usepackage{fancyhdr}
\usepackage{tikz}
\usetikzlibrary{positioning,arrows.meta}
\usepackage{etoolbox}
\usepackage{stmaryrd}
\usepackage[all]{xy}
\usepackage{physics}
\usepackage{tikz-cd}
\usepackage{mathrsfs}
\usepackage{mathtools}
\usepackage{xcolor}
\usepackage{latexsym}
\usepackage[normalem]{ulem}
\usepackage{thm-restate}
\usepackage{bussproofs}

\tikzcdset{row sep/normal=50pt, column sep/normal=50pt}

\usepackage[bookmarks=true,bookmarksdepth=3,bookmarksnumbered=true,colorlinks=false,pdfborder={0,0,0}]{hyperref} 

\makeatletter
\newcommand{\SafeTocLink}[3] {
\hyperlink{#1}{#2\hfill #3}
\patchcmd{\l@section}
{\@dottedtocline{1}{0em}{1.5em}{#1}{#2}}{\@dottedtocline{1}{0em}{1.5em}{\SafeTocLink[\@currentHref]}
{#1}{#2}}{} }{}{}
\makeatother

\theoremstyle{definition}
\newtheorem{theorem}{Theorem}[section]

\newtheorem{lemma}[theorem]{Lemma}
\newtheorem{corollary}[theorem]{Corollary}
\newtheorem{proposition}[theorem]{Proposition}
\newtheorem{definition}[theorem]{Definition}

\newtheorem{remark}[theorem]{Remark}

\newcommand{\id}{\mathstrut \mathrm{id} }

\newcommand{\cou}{\mathstrut \mathrm{co\mu} }

\newcommand{\dom}{\mathstrut \boldsymbol{\mathrm{dom}}\,}
\newcommand{\End}{\mathstrut \boldsymbol{\mathrm{End}}\,}
\newcommand{\cod}{\mathstrut \boldsymbol{\mathrm{cod}}\,}

\newcommand{\incl}{\mathstrut \boldsymbol{\mathrm{Incl}} }

\newcommand{\set}{\mathstrut \boldsymbol{\mathrm{set}} }

\newcommand{\mon}{\mathstrut \boldsymbol{\mathrm{mon}} }
\newcommand{\Pair}{\mathstrut \boldsymbol{\mathrm{Pair}} }
\newcommand{\Mod}{\mathstrut \boldsymbol{\mathrm{mod}} }

\newcommand{\coloreduline}[3][red]{
\tikz[baseline=(X.bese)]{\node[innee sep=0pt, outer sep=0pt](X) {\textcolor{#2}{#3}};\draw[#1, line width=0.6pt](X.south west) -- (X.south east); } }

\newcommand{\Thmref}[1]{\hyperref[#1] {\coloreduline[red]{black}{\ref*{#1}}}}

\title{\textbf{YOSHIDA ALGEBRA FOR GROUPOIDS}}

\author{KEITARO SHIIZUKA}

\thanks{Department of Mathematics, Kindai University, 3-4-1, Kowakae, Higashi-Osaka, Osaka 577-8502, Japan, Email: keitaro.shizuka.math@gmail.com , This research did not receive any specific grant from funding agencies in the public, commercial, or not-for-profit sectors.}

\begin{document}

\maketitle

\begin{abstract}
In this paper, we extend the notion of the Yoshida algebra of a finite group introduced in \cite{Yos83} to finite groupoids and investigate its fundamental properties. Our main results show that the center of the Yoshida algebra of a finite groupoid is isomorphic to the center of the corresponding groupoid algebra, and that there exists a surjective ring homomorphism from the crossed Burnside ring of a finite groupoid, introduced in \cite{Shi26+}, onto the center of the Yoshida algebra of a finite groupoid. 
\end{abstract}

\textbf{Key words and phrases} : Groupoid, groupoid action, Yoshida algebra, crossed Burnside ring, center, linear representation.

\tableofcontents

\section{Introduction}
The concept of a groupoid was introduced by H. Brandt in 1927 (see \cite{Bra27}). It generalizes the notion of a group, the main deference being that composition is not necessarily defined for every pair of elements. Various equivalent definitions of groupoids and their actions exist. In this manuscript, from a categorical perspective, we treat a groupoid $\mathcal{G}$ as a category in which every morphisms is invertible. Under this viewpoint, an action is given by a functor $\mathcal{G}\to \set$.
In Section $2$, we provide notations and basic definitions. In Section $3$, we study the ring structure of $\mathbbm{k}$-$\Mod^\mathcal{G}(\mathbbm{k}[S],\mathbbm{k}[S])$ and prove that its center naturally decomposes as a direct product. In Section $4$, we extend the definition of the $G$-set $\Omega$, which appears in the construction of the Yoshida algebra of a finite group, to the setting finite groupoids. We prove that the center of the Yoshida algebra of a finite groupoid is isomorphic to the center of the corresponding groupoid algebra.  
Finally, we construct a surjective ring homomorphism from the crossed Burnside ring of a finite groupoid, introduced \cite{Shi26+}, onto the center of the Yoshida algebra of a finite groupoid.
\section{Preliminaries}
\subsection{Notations and conventions}
Throughout this manuscript, all groupoids are assumed to be finite.
Let $\mathbbm{k}$ be a commutative ring, and denote by $\mathbbm{k}$-$\Mod$ the category of finitely generated $\mathbbm{k}$-modules. For a ring $A$, we write $Z(A)$ for its center. Other notation and conventions follow \cite{Shi26+}.

\subsection{Groupoids}
In this subsection, we prepare the groundwork for the subsequent sections. Throughout this paper, we treat groupoids within  categorical framework.

\begin{definition}\cite{Shi26+} {}\\
\noindent
$(1)$ A category $\mathcal{G}$ is a \textbf{groupoid} if every morphism in $\mathcal{G}$ is invertible.\\
\noindent
$(2)$ For each object $x\in\mathcal{G}_0$, the set $\mathcal{G}(x,x)$, denoted by $\mathcal{G}_x$, is called the \textbf{isotropy group} at $x\in\mathcal{G}_0$. \\
\noindent
$(3)$ A subcategory $\mathcal{H}$ of a groupoid $\mathcal{G}$  that is itself a groupoid is called a \textbf{subgroupoid} of $\mathcal{G}$. If $\mathcal{H}$ is a subgroupoid of $\mathcal{G}$, then the inclusion functor $\mathcal{H}\to \mathcal{G}$ is denoted by $\incl^\mathcal{G}_\mathcal{H}$.\\ 
\noindent
$(4)$ If $X$ is a finite set, then the groupoid $\Pair(X)$ is the category defined by the following conditions: 
\begin{enumerate}
\item $\Pair(X)_0:=X,$
\item $\Pair(X)(x,y):=\{(x,y)\}$, for \;$(x,y)\in X\times X,$
\item The composition of $f=(x,y)$ and $g=(y,z)$ is defined by $gf:=(x,z)$.
\end{enumerate}
The groupoid $\Pair(X)$ is called the \textbf{groupoid of pairs}.\\
\noindent
$(5)$ Let $\mathcal{G}$ be a groupoid. For $x,y\in \mathcal{G}_0$, define $x\sim y$ if there exists a morphism $g:x\to y$. Then $\sim$ is an equivalence relation on $\mathcal{G}_0$. The elements of the quotient set $\mathcal{G}_0/{\sim}$ are called the \textbf{connected components} of $\mathcal{G}$, and we denote this set by $\pi_0(\mathcal{G})$. In particular, if $|\pi_0(\mathcal{G})|=1$, then $\mathcal{G}$ is called a \textbf{connected groupoid}.\\
\noindent
$(6)$ The functor category $\set^\mathcal{G}$ is called the \textbf{category of $\mathcal{G}$-sets}. An object in $\set^\mathcal{G}$ is called a \textbf{$\mathcal{G}$-set}. 
\end{definition}

The following proposition is well known.
\begin{proposition}\label{gpdst}
Let $\mathcal{G}$ be a connected groupoid and let $x\in\mathcal{G}_0$. Then, $\mathcal{G}\cong\mathcal{G}_x\times\;\Pair(\mathcal{G}_0)$.
\end{proposition}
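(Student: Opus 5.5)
The plan is to build an explicit isomorphism of categories from a choice of connecting morphisms. Since $\mathcal{G}$ is connected, for each $y\in\mathcal{G}_0$ I choose a morphism $\tau_y\colon x\to y$, taking $\tau_x:=\id_x$. Here $\mathcal{G}_x$ is regarded as a one-object groupoid, so $\mathcal{G}_x\times\Pair(\mathcal{G}_0)$ has object set $\mathcal{G}_0$. Its morphisms $y\to z$ are pairs $(h,(y,z))$ with $h\in\mathcal{G}_x$, composed componentwise.

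I define a functor $\Phi\colon\mathcal{G}\to\mathcal{G}_x\times\Pair(\mathcal{G}_0)$ as the identity on objects. On a morphism $g\colon y\to z$ I set
\[
\Phi(g):=\bigl(\tau_z^{-1}\, g\,\tau_y,\;(y,z)\bigr).
\]
The first component is a composite $x\to y\to z\to x$, so it lies in $\mathcal{G}_x$.

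\textbf{Functoriality.} For $g\colon y\to z$ and $g'\colon z\to w$, the first component of $\Phi(g')\Phi(g)$ is $(\tau_w^{-1}g'\tau_z)(\tau_z^{-1}g\tau_y)=\tau_w^{-1}g'g\tau_y$. The second component is $(y,w)$. This is exactly $\Phi(g'g)$. Identities are preserved because $\tau_y^{-1}\id_y\tau_y=\id_x$.

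\textbf{Inverse functor.} I define $\Psi$ as the identity on objects and
\[
\Psi\bigl(h,(y,z)\bigr):=\tau_z\, h\,\tau_y^{-1}\colon y\to z .
\]
It is functorial by the same cancellation, $\tau_w h'\tau_z^{-1}\tau_z h\tau_y^{-1}=\tau_w h'h\tau_y^{-1}$. One checks directly that $\Psi\Phi(g)=\tau_z\tau_z^{-1}g\tau_y\tau_y^{-1}=g$ and $\Phi\Psi(h,(y,z))=(h,(y,z))$. Hence $\Phi$ is an isomorphism of categories, and in particular of groupoids.

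There is no real obstacle; the proof is pure bookkeeping. The only points needing care are these. The choice of $\tau$ uses connectedness, and the finiteness of $\mathcal{G}_0$ makes the choice elementary. One must also fix the convention for the product with a one-object groupoid and check that composition order matches the paper's convention $gf$ for $f$ followed by $g$. The isomorphism is not canonical, since it depends on the choice of the $\tau_y$, but the statement only asserts existence.
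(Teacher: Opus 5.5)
Your proof is correct and complete. The paper gives no proof of this proposition (it is quoted as well known), but your choice of connecting morphisms $\tau_y\colon x\to y$ with $\tau_x=\mathrm{id}_x$ is exactly the device the paper relies on implicitly, namely the family $\{g_y\}$ with $g_x=\mathrm{id}_x$ in the proof of Theorem \ref{center}. Your inverse functor also produces the elements $\lambda_{y,z}:=\Psi(\mathrm{id}_x,(y,z))=\tau_z\tau_y^{-1}$ used to define $\Omega$ in Section 4, and the same cancellation gives $\lambda_{y,y}=\mathrm{id}_y$ and $\lambda_{z,w}\lambda_{y,z}=\lambda_{y,w}$, as required there.
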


\begin{itemize}
\item A functor $X:\mathcal{G}\to \mathbbm{k}$-$\Mod$ is called a \textbf{$\mathbbm{k}$-linear representation of $\mathcal{G}$}. The functor category $\mathbbm{k}$-$\Mod^\mathcal{G}$ is called \textbf{the category of $\mathbbm{k}$-linear representations of $\mathcal{G}$}.
\item  Let $X$ be a $\mathcal{G}$-set. The \textbf{permutation representation} associated with $X$ is the functor  $\mathbbm{k}[X]:\mathcal{G}\to \mathbbm{k}-\Mod$ defined by the following conditions: 
\item For an object $x\in \mathcal{G}_0$, define $\mathbbm{k}[X](x):=\mathbbm{k}[X(x)]$
\item For a morphism $g:x\to y$, define $\mathbbm{k}[X](g):\mathbbm{k}[X(x)]\to \mathbbm{k}[X(y)]$ by extending $s\mapsto X(g)(s)$ $\mathbbm{k}$-linearly for $s\in X(x)$.
\end{itemize}

\begin{definition}
We define the algebra $\mathbbm{k}[\mathcal{G}]$ of $\mathcal{G}$ over $\mathbbm{k}$ to be the free $\mathbbm{k}$-module with the elements of $\mathcal{G}_1$ as a $\mathbbm{k}$-basis and with multiplication of two elements is given symbolically by 
\[(\sum_{g\in\mathcal{G}_1} x_gg)*(\sum_{h\in\mathcal{G}_1} y_hh):=\sum_{(g,h)\in\mathcal{G}^{(2)}} x_gy_h gh\]
Where, $\mathcal{G}^{(2)}:={\mathcal{G}_1}_{\dom}\times_{\cod}\mathcal{G}_1$.
\end{definition}

\begin{theorem}\label{gd}
If $\mathcal{G}\cong \coprod^n_{i=1}{\mathcal{G}}_{[i]}$ is the decomposition of $\mathcal{G}$ into its connected components, then the ring $Z(\mathbbm{k}[\mathcal{G}])$ decompose as \[Z(\mathbbm{k}[\mathcal{G}])\cong 
\prod^n_{i=1}Z(\mathbbm{k}[\mathcal{G}_{[i]}]).\]
\end{theorem}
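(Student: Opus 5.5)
The plan is to show first that $\mathbbm{k}[\mathcal{G}]$ itself splits as a ring product
\[
\mathbbm{k}[\mathcal{G}]\cong\prod_{i=1}^n \mathbbm{k}[\mathcal{G}_{[i]}],
\]
and then to use the elementary fact that the center of a finite product of rings is the product of the centers.

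\textbf{Step 1: decomposition of the module.} Since $\mathcal{G}_1=\coprod_i(\mathcal{G}_{[i]})_1$ as sets, the free module splits as a direct sum of $\mathbbm{k}$-modules:
\[
\mathbbm{k}[\mathcal{G}]=\bigoplus_i \mathbbm{k}[\mathcal{G}_{[i]}].
\]
For the multiplication, take $g\in\mathcal{G}_{[i]}$ and $h\in\mathcal{G}_{[j]}$ with $i\neq j$. Then $\dom g$ and $\cod h$ lie in different components, so $(g,h)\notin\mathcal{G}^{(2)}$. By the definition of $*$ this gives $g*h=0$. Hence the summands annihilate each other, and each summand is closed under multiplication, because composites of morphisms in $\mathcal{G}_{[i]}$ stay in $\mathcal{G}_{[i]}$. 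In particular, multiplication inside a summand agrees with the product of $\mathbbm{k}[\mathcal{G}_{[i]}]$.

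\textbf{Step 2: units and the product structure.} Since $\mathcal{G}$ is finite, $\mathbbm{k}[\mathcal{G}]$ has unit $1=\sum_{x\in\mathcal{G}_0}\id_x$. Grouping the objects by component, set
\[
e_i=\sum_{x\in(\mathcal{G}_{[i]})_0}\id_x .
\]
Each $e_i$ is the unit of $\mathbbm{k}[\mathcal{G}_{[i]}]$, and the $e_i$ are orthogonal central idempotents summing to $1$. Centrality holds because $e_i g=g=g e_i$ for $g\in\mathcal{G}_{[i]}$, while both products vanish for $g$ in other components. The map
\[
a\mapsto (e_1a,\dots,e_na)
\]
is then a unital ring isomorphism $\mathbbm{k}[\mathcal{G}]\to\prod_i\mathbbm{k}[\mathcal{G}_{[i]}]$.

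\textbf{Step 3: centers.} For rings $A_i$ we have $Z\bigl(\prod_i A_i\bigr)=\prod_i Z(A_i)$. Indeed, $(a_i)$ commutes with every $(b_i)$ exactly when each $a_i$ commutes with every $b_i$, which one sees by testing against tuples supported in a single coordinate. Applying this to the isomorphism from Step 2 gives the theorem.

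The only point needing care is Step 1's claim that products of morphisms from different components vanish, which rests on reading the multiplication as zero on non-composable pairs. Everything else is routine bookkeeping, so I do not expect a real obstacle; Proposition \ref{gpdst} is not needed here.
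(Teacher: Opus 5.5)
Your proposal is correct and follows the same route as the paper: split $\mathbbm{k}[\mathcal{G}]$ as the ring product $\prod_{i=1}^n\mathbbm{k}[\mathcal{G}_{[i]}]$, then use that the center of a finite product of rings is the product of the centers. You simply supply the details the paper leaves implicit, namely that products of morphisms from different components vanish and that the central idempotents $e_i$ realize the splitting.
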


\begin{proof}
By the connected decomposition $\mathcal{G}\cong \coprod^n_{i=1}{\mathcal{G}}_{[i]}$, there is a ring isomorphism between $\mathbbm{k}[\mathcal{G}]$ and $\prod^n_{i=1}\mathbbm{k}[\mathcal{G}_{[i]}]$. Therefore, \[Z(\mathbbm{k}[\mathcal{G}])\cong 
Z(\prod^n_{i=1} \mathbbm{k}[\mathcal{G}_{[i]}])\cong\prod^n_{i=1}Z(\mathbbm{k}[\mathcal{G}_{[i]}]).\]
\end{proof}

\begin{lemma}\label{morita1}
If $\mathcal{G}$ is connected and let $x\in\mathcal{G}_0$, then $\mathbbm{k}[\mathcal{G}]$ and $\mathbbm{k}[\mathcal{G}_x]$ are Morita equivalent.
\end{lemma}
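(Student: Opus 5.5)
We prove this by identifying $\mathbbm{k}[\mathcal{G}]$ with a full matrix ring over $\mathbbm{k}[\mathcal{G}_x]$ and then invoking the standard Morita equivalence between a ring $R$ and the matrix ring $M_n(R)$. Write $\mathcal{G}_0=\{x_1,\dots,x_n\}$ with $x_1=x$; this is a finite set since $\mathcal{G}$ is finite. By Proposition \ref{gpdst}, $\mathcal{G}\cong\mathcal{G}_x\times\Pair(\mathcal{G}_0)$. Concretely, choose for each $i$ a morphism $\gamma_i:x\to x_i$, with $\gamma_1=1_x$. Then every morphism $g:x_j\to x_i$ can be written uniquely as $g=\gamma_i h\gamma_j^{-1}$ with $h=\gamma_i^{-1}g\gamma_j\in\mathcal{G}_x$.

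First we define a $\mathbbm{k}$-linear map
\[\Phi:\mathbbm{k}[\mathcal{G}]\to M_n(\mathbbm{k}[\mathcal{G}_x])\]
on basis elements by sending $g:x_j\to x_i$ to $h\,E_{ij}$, where $h=\gamma_i^{-1}g\gamma_j$ and $E_{ij}$ is the matrix unit. By the uniqueness of the factorization above, $\Phi$ is a bijection between the basis $\mathcal{G}_1$ and the basis $\{hE_{ij}\}$, so it is a $\mathbbm{k}$-module isomorphism.

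Next we check that $\Phi$ is multiplicative. In $\mathbbm{k}[\mathcal{G}]$, a product $g*g'$ of basis elements is zero unless $\dom g=\cod g'$. If $g:x_j\to x_i$ and $g':x_l\to x_k$, the product is nonzero only when $j=k$, which matches $E_{ij}E_{kl}=\delta_{jk}E_{il}$. When $j=k$ we have
\[\gamma_i^{-1}gg'\gamma_l=(\gamma_i^{-1}g\gamma_j)(\gamma_j^{-1}g'\gamma_l),\]
so $\Phi(gg')=\Phi(g)\Phi(g')$. The unit $\sum_i 1_{x_i}$ maps to $\sum_i 1_xE_{ii}$, which is the identity matrix. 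Hence $\Phi$ is a ring isomorphism, and the only real work lies in this careful choice of the $\gamma_i$.

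Finally, $M_n(R)$ is Morita equivalent to $R$ for any ring $R$. One way to see this is that $P=R^n$ is a finitely generated projective generator of $R$-modules with $\mathrm{End}_R(P)\cong M_n(R)$. Another is the idempotent $e=E_{11}$, which satisfies $eM_n(R)e\cong R$ and $M_n(R)eM_n(R)=M_n(R)$. Transporting this through $\Phi$, the idempotent $e=1_x\in\mathbbm{k}[\mathcal{G}]$ satisfies $e\,\mathbbm{k}[\mathcal{G}]\,e=\mathbbm{k}[\mathcal{G}_x]$ and is full, because $1_{x_i}=\gamma_i 1_x\gamma_i^{-1}$. This last formulation gives a direct proof without matrices, and it is the version we would write.
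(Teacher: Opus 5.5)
Your proof is correct, but it takes a different route from the paper.

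\textbf{The paper's route.} The paper works at the level of module categories. It invokes Mitchell's theorem $\mathbbm{k}$-$\Mod^{C}\simeq\mathbbm{k}[C]$-$\Mod$ for finite categories $C$, applies it to both $\mathcal{G}$ and $\mathcal{G}_x$, and connects the two sides through the functor-category equivalence $\mathbbm{k}$-$\Mod^{\mathcal{G}}\simeq\mathbbm{k}$-$\Mod^{\mathcal{G}_x}$. That equivalence is induced by the categorical equivalence $\mathcal{G}\simeq\mathcal{G}_x$, which the paper quotes from its earlier work.

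\textbf{Your route.} You work directly at the level of rings. The choice of the $\gamma_i$ gives an explicit $\mathbbm{k}$-algebra isomorphism $\mathbbm{k}[\mathcal{G}]\cong M_n(\mathbbm{k}[\mathcal{G}_x])$, or equivalently the full idempotent $e=1_x$ with $e\,\mathbbm{k}[\mathcal{G}]\,e=\mathbbm{k}[\mathcal{G}_x]$. Your verifications are all correct: the bijection of bases, the matching of the condition $\dom g=\cod g'$ with $E_{ij}E_{kl}=\delta_{jk}E_{il}$, the unit, and fullness via $1_{x_i}=\gamma_i 1_x\gamma_i^{-1}$.

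\textbf{What each approach buys.}
\begin{itemize}
\item The paper's argument is shorter, given its black boxes. It also shows more generally that any equivalence of finite categories induces a Morita equivalence of their category algebras.
\item Your argument is self-contained and elementary, relying only on standard Morita theory for $R$ versus $M_n(R)$ or for full idempotents.
\item Your argument makes the equivalence explicit. Under Mitchell's correspondence, $M\mapsto 1_xM$ is precisely restriction of the associated functor along $\incl^{\mathcal{G}}_{\mathcal{G}_x}$, so it is the paper's composite equivalence written down concretely.
\item Your matrix isomorphism is strictly stronger than a Morita equivalence. For instance, it gives Corollary \ref{morita} immediately from $Z(M_n(R))=Z(R)\cdot I$, without needing Morita invariance of the center.
\end{itemize}
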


\begin{proof}
For a finite category $C$, the equivalence between $k$-$\Mod^C$ and $k[C]$-$\Mod$ was shown by Barry Mitchell in \cite{Mit72}. By applying this result to a finite groupoid $\mathcal{G}$, we have two categorical equivalences $k$-$\Mod^\mathcal{G}\simeq k[\mathcal{G}]$-$\Mod$ and $k$-$\Mod^{\mathcal{G}_x}\simeq k[\mathcal{G}_x]$-$\Mod$. By Lemma 2.3 in \cite{Shi26+}, we have a categorical equivalence $\mathcal{G}\simeq \mathcal{G}_x$, and hence obtain a categorical equivalence $k$-$\Mod^{\mathcal{G}}\simeq k$-$\Mod^{\mathcal{G}_x}$. Therefore, $\mathbbm{k}[\mathcal{G}]$ and $\mathbbm{k}[\mathcal{G}_x]$ are Morita equivalent.\end{proof}

\begin{remark}
The correspondence underlying the categorical equivalence between $k$-$\Mod^C$ and $k[C]$-$\Mod$ is also described in Section 2.2 of \cite{Xu07}.
\end{remark}

\begin{corollary}\label{morita}
If $\mathcal{G}$ is connected and let $x\in\mathcal{G}_0$, then $Z(\mathbbm{k}[\mathcal{G}])\cong Z(\mathbbm{k}[\mathcal{G}_x])$ holds.
\end{corollary}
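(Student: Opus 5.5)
The natural route is to deduce the statement from Lemma \ref{morita1} via the standard fact that the center of a ring is a Morita invariant. Concretely, for any ring $A$ the center $Z(A)$ is canonically isomorphic to the endomorphism ring of the identity functor on $A$-$\Mod$: a central element $z$ gives the natural transformation whose component at $M$ is multiplication by $z$. Conversely, a natural endomorphism $\eta$ of the identity functor determines $z=\eta_A(1)$, and naturality with respect to right multiplications $A\to A$ shows that $z$ is central. Naturality with respect to the maps $A\to M$, $a\mapsto am$, then shows that $\eta_M$ is multiplication by $z$. Since the endomorphism ring of the identity functor depends only on the category up to equivalence, any equivalence $\mathbbm{k}[\mathcal{G}]$-$\Mod\simeq\mathbbm{k}[\mathcal{G}_x]$-$\Mod$ induces a ring isomorphism $Z(\mathbbm{k}[\mathcal{G}])\cong Z(\mathbbm{k}[\mathcal{G}_x])$.

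The steps are as follows. First, recall from the proof of Lemma \ref{morita1} the chain of equivalences
\[\mathbbm{k}[\mathcal{G}]\text{-}\Mod\simeq\mathbbm{k}\text{-}\Mod^{\mathcal{G}}\simeq\mathbbm{k}\text{-}\Mod^{\mathcal{G}_x}\simeq\mathbbm{k}[\mathcal{G}_x]\text{-}\Mod.\]
Second, prove the identification $Z(A)\cong\operatorname{End}(\mathrm{Id}_{A\text{-}\Mod})$ as sketched above. Third, conjugate by the equivalence $F$, with quasi-inverse $G$: a natural endomorphism $\eta$ of $\mathrm{Id}$ is sent to $F\eta G$, transported through the natural isomorphism $FG\cong\mathrm{Id}$. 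This is a ring isomorphism, and composing it with the identifications from the second step gives the claim.

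The main obstacle is a technicality. The paper's $\mathbbm{k}$-$\Mod$ consists of finitely generated modules, and it is not clear whether the equivalences from \cite{Mit72} are meant for all modules or only finitely generated ones. The argument in the second step only uses the regular module $A$ and cyclic maps $A\to M$. Since $\mathbbm{k}[\mathcal{G}]$ is finitely generated over $\mathbbm{k}$ (as $\mathcal{G}$ is finite), $A$ lies in the restricted category as well, so the argument works verbatim there.

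Alternatively, one can bypass Morita theory entirely by using Proposition \ref{gpdst}. Write $\mathcal{G}\cong\mathcal{G}_x\times\Pair(\mathcal{G}_0)$. One checks that $\mathbbm{k}[\Pair(X)]\cong M_{|X|}(\mathbbm{k})$, and that $\mathbbm{k}[\mathcal{G}_x\times\Pair(\mathcal{G}_0)]\cong M_{|\mathcal{G}_0|}(\mathbbm{k}[\mathcal{G}_x])$. Here the unit of $\mathbbm{k}[\mathcal{G}]$ is $\sum_y\mathrm{id}_y$, which is exactly the identity matrix. The center of a full matrix ring $M_n(R)$ consists of the scalar matrices $rI$ with $r\in Z(R)$, which gives the result directly. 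I would present the Morita argument and mention this explicit computation as a check.
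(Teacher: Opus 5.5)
Your proposal is correct and takes the paper's route: the paper deduces the corollary in one line from the Morita equivalence of Lemma \ref{morita1}, and you spell out the standard Morita invariance of the center via $Z(A)\cong\operatorname{End}(\mathrm{Id})$, including the sound observation that only the regular module is needed, so the argument also works for finitely generated modules. Your alternative check $\mathbbm{k}[\mathcal{G}]\cong M_{|\mathcal{G}_0|}(\mathbbm{k}[\mathcal{G}_x])$ is also correct, and it gives a self-contained proof that does not rely on the Mitchell equivalence.
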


\begin{proof}
This follows from Lemma \ref{morita1}.
\end{proof}

\subsection{Crossed Burnside rings for groupoids}
In this subsection, we recall the definitions and basic properties of the crossed Burnside ring of a groupoid. For further details, we refer the reader to \cite{Shi26+}.

\begin{itemize}
\item If $\mathcal{G}$ is a groupoid, then \textbf{conjugation action} of $\mathcal{G}$
is the functor $\mathcal{G}^c:\mathcal{G}\to \mon$ defined by the following conditions: 
\begin{enumerate}
\item For an object $x\in \mathcal{G}_0$, define $\mathcal{G}^c(x):=\mathcal{G}_x,$
\item For a morphism $g:x\to y$, define $\mathcal{G}^c(g):\mathcal{G}_x\to \mathcal{G}_y$ by $a\mapsto gag^{-1}$.
\end{enumerate}
\item A functor $\mathcal{G}\to \mon$ is called a \textbf{$\mathcal{G}$-monoid}. If $X$ is a $\mathcal{G}$-monoid, and let $U:\mon\to\set$ be a forgetful functor. The composite functor $U\circ X$ is a $\mathcal{G}$-set. We denote $U\circ X$ by $\overline{X}$.
\end{itemize}

\begin{theorem}\cite[Theorem 3.1, Definition 3.3]{Shi26+}
If $S$ is a $\mathcal{G}$-monoid , then the category $\set^\mathcal{G}/ \overline{S}$ is a monoidal category. The multiplication on $K_0(\set^\mathcal{G}/\overline{S},\sqcup)$ is induced by the monoidal structure on $\set^\mathcal{G}/ \overline{S}$
\[(Y\stackrel{\theta}{\to} \overline{S})\otimes (Z\stackrel{\tau}{\to}\overline{S}):=(Y\times Z\stackrel{\theta\times\tau}{\to} \overline{S}\times \overline{S}\stackrel{m}{\to} \overline{S}).  \]
induced by the monoid multiplication. This ring is denoted by $B^c(\mathcal{G},\overline{S})$ and we denote  $B^c_\mathbbm{k}(\mathcal{G})=\mathbbm{k}\otimes_\mathbbm{Z} B^c(\mathcal{G},\mathcal{G}^c)$.
\end{theorem}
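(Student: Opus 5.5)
To prove that $\set^\mathcal{G}/\overline{S}$ is monoidal, I will write down the data explicitly and check the axioms pointwise at each object $x\in\mathcal{G}_0$. The tensor of $(Y,\theta)$ and $(Z,\tau)$ is the $\mathcal{G}$-set $Y\times Z$, formed objectwise as $(Y\times Z)(x)=Y(x)\times Z(x)$, with structure map $m\circ(\theta\times\tau)$. Here $m:\overline{S}\times\overline{S}\to\overline{S}$ is the componentwise multiplication $m_x:S(x)\times S(x)\to S(x)$.

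First I check that $m$ is a morphism of $\mathcal{G}$-sets. This holds precisely because each $S(g)$ is a monoid homomorphism: $S(g)(ab)=S(g)(a)S(g)(b)$. Hence $m\circ(\theta\times\tau)$ is a natural transformation, and the tensor of two objects is again an object of the slice. On morphisms, given $f:(Y,\theta)\to(Y',\theta')$ and $f':(Z,\tau)\to(Z',\tau')$, I take $f\times f'$. It commutes with the structure maps since $\theta'f=\theta$ and $\tau'f'=\tau$. Functoriality of $\otimes$ is inherited from functoriality of $\times$.

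The unit object is the terminal $\mathcal{G}$-set $\ast$ (a point at each object) together with $e:\ast\to\overline{S}$ picking out the identity $1_x\in S(x)$. This is natural because monoid homomorphisms preserve units.

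The associator and unitors are the usual set-theoretic bijections $(Y\times Z)\times W\cong Y\times(Z\times W)$ and $\ast\times Y\cong Y\cong Y\times\ast$. The only new point is that they are morphisms over $\overline{S}$:
\begin{itemize}
\item for the associator this reduces to $(\theta(y)\tau(z))\sigma(w)=\theta(y)(\tau(z)\sigma(w))$, which is associativity in $S(x)$;
\item for the unitors it reduces to $1_x\theta(y)=\theta(y)=\theta(y)1_x$.
\end{itemize}
Naturality of these isomorphisms, together with the pentagon and triangle identities, holds because they hold in $(\set,\times,\ast)$ and the forgetful functor from the slice is faithful. So the diagrams commute once their components are known to be slice morphisms.

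For the ring structure, I note that $\sqcup$ (the objectwise coproduct, with structure map induced by the universal property) is the coproduct in the slice category. Hence $K_0(\set^\mathcal{G}/\overline{S},\sqcup)$ is the Grothendieck group of isomorphism classes, which is finitely generated since $\mathcal{G}$ is finite and all $\mathcal{G}$-sets are taken finite.

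Distributivity $(Y\sqcup Y')\otimes Z\cong (Y\otimes Z)\sqcup(Y'\otimes Z)$ holds in the slice, because the canonical bijection of sets commutes with the structure maps componentwise. Moreover $\otimes$ respects isomorphism classes. Therefore $\otimes$ descends to a bilinear, associative, unital multiplication on $K_0$, with unit $[\ast\xrightarrow{e}\overline{S}]$. This gives the ring $B^c(\mathcal{G},\overline{S})$. Base change to $\mathbbm{k}$ then defines $B^c_\mathbbm{k}(\mathcal{G})$, taking $S=\mathcal{G}^c$, which is a $\mathcal{G}$-monoid since conjugation $a\mapsto gag^{-1}$ is a group homomorphism.

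The main point to be careful about is the naturality of $m$, which is where the hypothesis that $S$ lands in $\mon$ rather than $\set$ is used. Without this hypothesis the tensor product would not be well defined. Everything else is routine transport of the cartesian monoidal structure on $\set^\mathcal{G}$.
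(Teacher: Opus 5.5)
Your proposal is correct. The paper gives no proof of this statement and simply imports it from \cite{Shi26+}, and your direct check is the standard argument: naturality of $m$ and of the unit $e$ because $S$ takes values in $\mon$, coherence via the faithful forgetful functor to $\set^{\mathcal{G}}$, and distributivity of $\otimes$ over $\sqcup$. One small addition: the monoids $S(x)$ need not be commutative, so $B^c(\mathcal{G},\overline{S})$ can be noncommutative, and for bilinearity you should also state the right distributive law $Z\otimes(Y\sqcup Y')\cong(Z\otimes Y)\sqcup(Z\otimes Y')$, which holds by the same argument.
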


\begin{theorem}\cite[Theorem 3.8]{Shi26+}\label{cd}
If $\mathcal{G}\cong\coprod^n_{i=1}\mathcal{G}_{[i]}$ is the decomposition of $\mathcal{G}$ into its connected components, then two rings $B^c(\mathcal{G},\mathcal{G}^c)$ and $\prod_{x\in\pi_0(\mathcal{G})}B^c(\mathcal{G}_x)$ are isomorphic.
\end{theorem}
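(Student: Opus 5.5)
We prove the decomposition in the statement by mirroring the proof of Theorem \ref{gd} at the level of $\mathcal{G}$-sets over the conjugation $\mathcal{G}$-monoid. There are two steps: first split the whole slice category along connected components, then replace each component by one of its isotropy groups.

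Write $\mathcal{G}\cong\coprod_{i=1}^n\mathcal{G}_{[i]}$. A functor out of a coproduct of categories is the same as a family of functors, one on each summand. Hence
\[\set^{\mathcal{G}}\simeq\prod_{i=1}^n\set^{\mathcal{G}_{[i]}}.\]
Under this equivalence $\overline{\mathcal{G}^c}$ corresponds to the family $(\overline{\mathcal{G}_{[i]}^c})_i$, because $\mathcal{G}^c(x)=\mathcal{G}_x$ depends only on the component containing $x$. Consequently the slice category splits as
\[\set^{\mathcal{G}}/\overline{\mathcal{G}^c}\simeq\prod_{i=1}^n\set^{\mathcal{G}_{[i]}}/\overline{\mathcal{G}_{[i]}^c}.\]
This equivalence preserves $\sqcup$, so on $K_0$ it gives an additive isomorphism $B^c(\mathcal{G},\mathcal{G}^c)\cong\bigoplus_i B^c(\mathcal{G}_{[i]},\mathcal{G}_{[i]}^c)$.

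To see it is multiplicative, note that the tensor product $(Y\times Z\to\overline{S}\times\overline{S}\to\overline{S})$ is computed objectwise, so it is computed componentwise as well. The identity element of $B^c(\mathcal{G},\mathcal{G}^c)$ is the class of the terminal $\mathcal{G}$-set mapping to the identity sections. It splits into the identities of the factors. Therefore the additive isomorphism is a ring isomorphism onto the product $\prod_i B^c(\mathcal{G}_{[i]},\mathcal{G}_{[i]}^c)$.

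Next, for a connected groupoid $\mathcal{G}_{[i]}$ pick an object $x$ in it. By Lemma 2.3 of \cite{Shi26+} (used in Lemma \ref{morita1}), the inclusion $\mathcal{G}_x\to\mathcal{G}_{[i]}$ is an equivalence, so restriction along it gives $\set^{\mathcal{G}_{[i]}}\simeq\set^{\mathcal{G}_x}$. The restriction of $\mathcal{G}_{[i]}^c$ to $\mathcal{G}_x$ is the conjugation $\mathcal{G}_x$-monoid $\mathcal{G}_x$ itself. Since restriction preserves finite products and coproducts and commutes with the monoid multiplication, it induces a monoidal equivalence of slices
\[\set^{\mathcal{G}_{[i]}}/\overline{\mathcal{G}_{[i]}^c}\simeq\set^{\mathcal{G}_x}/\overline{\mathcal{G}_x^c}.\]
Its inverse is right Kan extension; equivalently one can use the isomorphism of Proposition \ref{gpdst}. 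Taking $K_0$ yields $B^c(\mathcal{G}_{[i]},\mathcal{G}_{[i]}^c)\cong B^c(\mathcal{G}_x)$, the classical crossed Burnside ring of the group $\mathcal{G}_x$. Indexing components by $\pi_0(\mathcal{G})$ gives the claimed isomorphism.

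The main obstacle is this second step. One must check that an equivalence of base categories induces an equivalence of slice categories over the corresponding monoid objects, and that this equivalence is strong monoidal, so that the ring structure on $K_0$ is preserved. I would handle it by observing that restriction along an equivalence preserves limits, so it preserves $\overline{S}\times\overline{S}$ and $m$ up to canonical isomorphism. An equivalence also preserves and reflects isomorphism classes, so it gives a bijection on isomorphism classes of objects, and the ring isomorphism on $K_0$ follows.
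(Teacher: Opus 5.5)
The paper gives no proof of this statement; it is imported from \cite[Theorem 3.8]{Shi26+}, so there is no internal proof to compare against. Your argument is correct. It follows the same two-step pattern the paper uses for its own analogous results:
\begin{itemize}
\item split along connected components, as in Theorems \ref{gd} and \ref{pd};
\item pass to an isotropy group via the equivalence $\mathcal{G}_x\simeq\mathcal{G}_{[i]}$ from Lemma 2.3 of \cite{Shi26+}, as in Lemma \ref{morita1}. Theorem \ref{center} does this transfer by hand, using chosen morphisms $g_y$.
\end{itemize}

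The step you flag as the main obstacle is lighter than you suggest. You only assert that an equivalence induces an equivalence of slices, and your closing remark about isomorphism classes does not prove it, but it is a one-line check. Write $\iota\colon\mathcal{G}_x\to\mathcal{G}_{[i]}$ for the inclusion.
\begin{itemize}
\item Restriction $\iota^*$ preserves the pointwise products and coproducts on the nose, and it sends $\overline{\mathcal{G}_{[i]}^c}$ and its multiplication exactly to $\overline{\mathcal{G}_x^c}$ and its multiplication. So $\iota^*$ is strictly monoidal on the slices.
\item Any fully faithful, essentially surjective $F$ induces an equivalence $C/A\to D/F(A)$. Given $\tau\colon Z\to F(A)$ and an isomorphism $\alpha\colon F(Y)\to Z$, fullness and faithfulness give a unique $\sigma\colon Y\to A$ with $F(\sigma)=\tau\alpha$. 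Then $\alpha$ is an isomorphism $(F(Y),F(\sigma))\cong(Z,\tau)$ in the slice.
\end{itemize}
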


\begin{lemma}\cite[(3.1), (4.2)]{OY01}\label{hom}
If $G$ is a finite group, then the following two statements hold:\\
\noindent
$(1)$ The $\mathbbm{k}$-module $B^c_\mathbbm{k}(G)$ is free and generated by finitely many $G$-conjugacy classes $[H,a]_G$. Here $H$ is subgroup of $G$, and $a$ is an element of the centralizer $C_G(H)$. Two pairs $(H,a)$ $(K,b)$ are said to be $G$-conjugate if there exists an element $g\in G$ such that $H=gKg^{-1}, a=gbg^{-1}$.\\
\noindent
$(2)$ The map $B_\mathbbm{k}^c(G)\to Z(\mathbbm{k}[G])$ defined by $[H,a]\mapsto \sum_{x\in[G/H]}xax^{-1}$ is a surjective ring homomorphism.
\end{lemma}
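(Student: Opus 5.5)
We prove the two parts separately, since (2) depends on the basis from (1).

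\textbf{Part (1).} We realize $B^c(G,G^c)$ as the Grothendieck group of the category $\set^G/\overline{G^c}$ of finite $G$-sets $Y$ equipped with a $G$-map $\theta\colon Y\to G^c$, where $G$ acts on itself by conjugation. Every object decomposes as a disjoint union of transitive objects, so $K_0$ is free abelian on the isomorphism classes of transitive objects. A transitive object is $G/H$ together with an equivariant map to $G^c$. Such a map is determined by the image $a$ of the coset $H$, and equivariance forces $hah^{-1}=a$ for all $h\in H$, i.e.\ $a\in C_G(H)$. Two such objects $(G/H,a)$ and $(G/K,b)$ are isomorphic if and only if there is a $g\in G$ with $H=gKg^{-1}$ and $a=gbg^{-1}$: an isomorphism $G/K\to G/H$ sends the coset $K$ to a coset $g^{-1}H$, and compatibility with the maps to $G^c$ gives the conjugacy of the elements. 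Tensoring with $\mathbbm{k}$ preserves freeness. There are only finitely many classes $[H,a]_G$ because $G$ is finite.

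\textbf{Part (2): well-definedness and multiplicativity.} Define $\phi[H,a]=\sum_{x\in[G/H]}xax^{-1}$. The sum is independent of the choice of coset representatives, since $a$ centralizes $H$. The image is invariant under conjugation by $G$, because left multiplication permutes the cosets, so it lies in $Z(\mathbbm{k}[G])$. Replacing $(H,a)$ by a conjugate pair only reindexes the sum, so $\phi$ depends only on the class $[H,a]_G$.

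A cleaner description is intrinsic: for an object $\theta\colon Y\to G$, set $\phi(Y,\theta)=\sum_{y\in Y}\theta(y)$. This is clearly additive on disjoint unions. For a transitive object it agrees with the formula above, since the map $xH\mapsto xax^{-1}$ is exactly $\theta$. For products it gives
\[\phi(Y\times Z,\, m\circ(\theta\times\tau))=\sum_{(y,z)}\theta(y)\tau(z)=\phi(Y)\,\phi(Z).\]
The unit is the object $\{e\}\to G$ sending the point to $1$, and $\phi$ maps it to $1$. So $\phi$ is a ring homomorphism, and multiplicativity needs no Mackey-formula bookkeeping.

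\textbf{Part (2): surjectivity, the main point.} Over any commutative ring $\mathbbm{k}$, the center $Z(\mathbbm{k}[G])$ is free with basis the conjugacy class sums $\sum_{c\in\mathrm{Cl}(a)}c$. Take $H=C_G(a)$. Since $a\in Z(C_G(a))$, we have $a\in C_G(H)$, so $[C_G(a),a]$ is a basis element. Its image is
\[\sum_{x\in[G/C_G(a)]}xax^{-1},\]
which is exactly the class sum of $a$, because $x\mapsto xax^{-1}$ gives a bijection from $G/C_G(a)$ onto the conjugacy class of $a$. Hence every class sum is in the image, and $\phi$ is surjective.

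The only step that needs care is the choice $H=C_G(a)$: one must check that $a$ centralizes $H$, so that $(H,a)$ is an admissible pair. Once that is verified, surjectivity follows immediately.
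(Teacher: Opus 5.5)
The paper gives no proof of this lemma; it imports both parts from Oda--Yoshida \cite[(3.1), (4.2)]{OY01}. Your proposal is a correct, self-contained proof, so there is no argument in the paper to compare it with.

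A few routine points you left implicit are worth writing down. In part (1), freeness of $K_0$ uses that the orbit decomposition of an object $\theta\colon Y\to\overline{G^c}$ is unique up to isomorphism. An isomorphism over $\overline{G^c}$ carries orbits to orbits together with their structure maps, so the monoid of isomorphism classes is free commutative on the transitive classes, and its group completion is free abelian on them. The converse direction of your conjugacy criterion is given by the map $xK\mapsto xg^{-1}H$.

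In part (2), the intrinsic formula $\phi(Y,\theta)=\sum_{y\in Y}\theta(y)$ is a good choice. It is visibly isomorphism-invariant, additive and multiplicative, so you never need a double-coset formula for $[H,a]\cdot[K,b]$. You should add that this first gives a ring map $B^c(G,G^c)\to Z(\mathbb{Z}[G])$, which you then extend $\mathbbm{k}$-linearly. The extension lands in $Z(\mathbbm{k}[G])$ because central elements form a $\mathbbm{k}$-submodule. Your surjectivity argument, sending $[C_G(a),a]$ to the class sum of $a$, is exactly right. It only needs the class sums to span $Z(\mathbbm{k}[G])$, which holds over any commutative ring $\mathbbm{k}$.
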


\begin{theorem}\label{stt}
If $\mathcal{G}$ is connected and $x\in\mathcal{G}_0$ then there exists a surjective ring homomorphism from $B^c_\mathbbm{k}(\mathcal{G})$ to $Z(\mathbbm{k}[\mathcal{G}])$
\end{theorem}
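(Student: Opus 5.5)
The plan is to reduce the statement to the group case and then invoke Lemma \ref{hom}(2). Concretely, I will build a chain of maps
\[
B^c_\mathbbm{k}(\mathcal{G}) \xrightarrow{\ \cong\ } B^c_\mathbbm{k}(\mathcal{G}_x) \longrightarrow Z(\mathbbm{k}[\mathcal{G}_x]) \xrightarrow{\ \cong\ } Z(\mathbbm{k}[\mathcal{G}]).
\]
The first arrow is a ring isomorphism, the second is the surjective ring homomorphism of Lemma \ref{hom}(2), and the third is a ring isomorphism. The composite is then a surjective ring homomorphism, as the statement requires.

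\textbf{First arrow.} Since $\mathcal{G}$ is connected, $\pi_0(\mathcal{G})$ has one element, represented by $x$. Theorem \ref{cd} then gives a ring isomorphism $B^c(\mathcal{G},\mathcal{G}^c)\cong B^c(\mathcal{G}_x)$. Tensoring with $\mathbbm{k}$ over $\mathbbm{Z}$ preserves ring isomorphisms, so $B^c_\mathbbm{k}(\mathcal{G})\cong B^c_\mathbbm{k}(\mathcal{G}_x)$.

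\textbf{Second arrow.} Apply Lemma \ref{hom}(2) to the finite group $G=\mathcal{G}_x$.

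\textbf{Third arrow.} Corollary \ref{morita} gives $Z(\mathbbm{k}[\mathcal{G}_x])\cong Z(\mathbbm{k}[\mathcal{G}])$. The point to check is that this is an isomorphism of rings, not merely of abstract objects. The centre of a ring is the endomorphism ring of the identity functor on its module category, so a Morita equivalence does induce a ring isomorphism of centres.

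I would also make the third arrow explicit, which avoids relying on the abstract Morita argument. Using Proposition \ref{gpdst}, choose for each $y\in\mathcal{G}_0$ a morphism $t_y:x\to y$, with $t_x=\id_x$. Define
\[
\Phi(z)=\sum_{y\in\mathcal{G}_0} t_y\, z\, t_y^{-1}.
\]
Checking that $\Phi$ is a ring isomorphism $Z(\mathbbm{k}[\mathcal{G}_x])\to Z(\mathbbm{k}[\mathcal{G}])$ is routine:
\begin{itemize}
\item \emph{Multiplicative:} products of terms with different $y$ vanish, since their composites are undefined.
\item \emph{Central:} for $g:y\to y'$ we have $t_{y'}^{-1}g t_y\in\mathcal{G}_x$, and centrality of $z$ in $\mathbbm{k}[\mathcal{G}_x]$ gives $g\,\Phi(z)=\Phi(z)\,g$.
\item \emph{Well defined:} $\Phi(z)$ is independent of the choice of the $t_y$, again because $z$ is central.
\item \emph{Bijective:} the inverse is the corner map $w\mapsto \id_x w\,\id_x$.
\end{itemize}

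Combining the three arrows, the composite sends the basis element of $B^c_\mathbbm{k}(\mathcal{G})$ corresponding to $[H,a]_{\mathcal{G}_x}$ to
\[
\sum_{y\in\mathcal{G}_0}\ \sum_{h\in[\mathcal{G}_x/H]} t_y\, h a h^{-1}\, t_y^{-1}.
\]

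\textbf{Main obstacle.} The only step that needs care is that the isomorphisms imported from Theorem \ref{cd} and Corollary \ref{morita} are ring isomorphisms compatible with multiplication, not just module isomorphisms. Everything else is composition of established results. If the ring structure in Theorem \ref{cd} is unclear from \cite{Shi26+}, the fallback is to restrict along $\incl^{\mathcal{G}}_{\mathcal{G}_x}$. Restriction takes a $\mathcal{G}$-set over $\mathcal{G}^c$ to its value at $x$, a $\mathcal{G}_x$-set over $\mathcal{G}_x$ with the conjugation action. This is an equivalence of monoidal categories, because $\mathcal{G}\simeq\mathcal{G}_x$ and the equivalence carries products to products, and such an equivalence induces a ring isomorphism on $K_0$.
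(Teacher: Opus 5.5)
Your proposal is correct and follows the same route as the paper: identify $B^c_\mathbbm{k}(\mathcal{G})\cong B^c_\mathbbm{k}(\mathcal{G}_x)$ via Theorem~\ref{cd} and $Z(\mathbbm{k}[\mathcal{G}])\cong Z(\mathbbm{k}[\mathcal{G}_x])$, then compose with the surjection of Lemma~\ref{hom}(2). Your citation of Corollary~\ref{morita} for the centre isomorphism is the appropriate one, since the paper's proof attributes that step to Theorem~\ref{gd}, which only splits off connected components; your explicit map $\Phi(z)=\sum_{y} t_y z t_y^{-1}$, with inverse $w\mapsto \id_x w\,\id_x$, is a correct concrete realisation of that isomorphism.
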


\begin{proof}
By Theorem \ref{gd} and Theorem \ref{cd}, we obtain the isomorphism $B^c_\mathbbm{k}(\mathcal{G})\cong B^c_\mathbbm{k}(\mathcal{G}_x)$ and $Z(\mathbbm{k}[\mathcal{G}])\cong Z(\mathbbm{k}[\mathcal{G}_x])$ respectively. By Lemma \ref{hom}, there exists a surjective ring homomorphism from $B^c_\mathbbm{k}(\mathcal{G}_x)$ to $Z(\mathbbm{k}[\mathcal{G}_x])$. Therefore, there exists a surjective ring homomorphism  from $B^c_\mathbbm{k}(\mathcal{G})$ to $Z(\mathbbm{k}[\mathcal{G}])$.
\end{proof}

\section{The center of the endomorphism ring in $\mathbbm{k}$-$\Mod^\mathcal{G}$}
In this section, we prove certain basic properties of the ring $\mathbbm{k}$-$\Mod^{\mathcal{G}}(\mathbbm{k}[S],\mathbbm{k}[S])$ for a $\mathcal{G}$-set $S$.

\begin{theorem}
If $S$ is a $\mathcal{G}$-set, then $\mathbbm{k}$-$\Mod^{\mathcal{G}}(\mathbbm{k}[S],\mathbbm{k}[S])$ is a ring.
\end{theorem}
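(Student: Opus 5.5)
The plan is to show that the set of natural transformations $\mathbbm{k}[S]\Rightarrow\mathbbm{k}[S]$ forms a ring. Addition is taken componentwise and multiplication is vertical composition of natural transformations. Concretely, an element $\alpha\in\mathbbm{k}$-$\Mod^{\mathcal{G}}(\mathbbm{k}[S],\mathbbm{k}[S])$ is a family $(\alpha_x)_{x\in\mathcal{G}_0}$ of $\mathbbm{k}$-linear maps $\alpha_x:\mathbbm{k}[S(x)]\to\mathbbm{k}[S(x)]$ such that for every morphism $g:x\to y$ we have
\[
\mathbbm{k}[S](g)\circ\alpha_x=\alpha_y\circ\mathbbm{k}[S](g).
\]
For two such families $\alpha,\beta$, I define
\[
(\alpha+\beta)_x:=\alpha_x+\beta_x,\qquad (\alpha\beta)_x:=\alpha_x\circ\beta_x .
\]
The zero element is the family of zero maps, and the unit is the identity transformation $\id_{\mathbbm{k}[S]}$.

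The first step is to check that these operations stay inside the set of natural transformations. Naturality of $\alpha+\beta$ follows because $\mathbbm{k}[S](g)$ is $\mathbbm{k}$-linear, so it distributes over the sum $\alpha_x+\beta_x$. Naturality of $\alpha\beta$ is the standard fact that a vertical composite of natural transformations is natural:
\[
\mathbbm{k}[S](g)\alpha_x\beta_x=\alpha_y\,\mathbbm{k}[S](g)\,\beta_x=\alpha_y\beta_y\,\mathbbm{k}[S](g).
\]
Similarly, $-\alpha$ and the zero family are natural.

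The second step is to verify the ring axioms. Each of them holds componentwise, because each $\mathrm{End}_{\mathbbm{k}}(\mathbbm{k}[S(x)])$ is a ring under addition and composition. Associativity and the unit law then hold componentwise, and so does distributivity, since composition of $\mathbbm{k}$-linear maps is bilinear. A cleaner way to package this is to observe that $\mathbbm{k}$-$\Mod^{\mathcal{G}}$ is a $\mathbbm{k}$-linear (preadditive) category, since $\mathbbm{k}$-$\Mod$ is. The endomorphism set of any object in a preadditive category is a ring, so the claim follows.

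There is no real obstacle here. The only point needing care is the naturality of sums, which uses the $\mathbbm{k}$-linearity of the maps $\mathbbm{k}[S](g)$. The same argument also shows that the ring is in fact a $\mathbbm{k}$-algebra.
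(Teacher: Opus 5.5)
Your proposal is correct and takes the same approach as the paper. The paper's proof simply notes that $\mathbbm{k}$-$\Mod^{\mathcal{G}}$ inherits additivity from $\mathbbm{k}$-$\Mod$, so endomorphism sets are rings under composition; your componentwise check that sums and composites of natural transformations are again natural spells out why that inheritance holds.
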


\begin{proof}
Since $\mathbbm{k}$-$\Mod$ is an additive category, so $\mathbbm{k}$-$\Mod^{\mathcal{G}}$ is also additive. Consequently, $\mathbbm{k}$-$\Mod^{\mathcal{G}}(\mathbbm{k}[S],\mathbbm{k}[S])$ becomes a ring, with the multiplication given by composition.
\end{proof}

\begin{theorem}\label{center}
If $\mathcal{G}$ is connected, $x\in \mathcal{G}_0$ and $S$ is a $\mathcal{G}$-set, then two rings $Z(\mathbbm{k}$-$\Mod^{\mathcal{G}}(\mathbbm{k}[S],\mathbbm{k}[S]))$ and $Z(\mathbbm{k}$-$\Mod^{\mathcal{G}_x}(\mathbbm{k}[S\circ\incl^\mathcal{G}_{\mathcal{G}_x}],\mathbbm{k}[S\circ\incl^\mathcal{G}_{\mathcal{G}_x}]))$ are isomorphic.
\end{theorem}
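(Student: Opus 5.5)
We will prove a stronger statement: restriction along $\incl^\mathcal{G}_{\mathcal{G}_x}$ is a ring isomorphism
\[\mathbbm{k}\text{-}\Mod^{\mathcal{G}}(\mathbbm{k}[S],\mathbbm{k}[S])\longrightarrow \mathbbm{k}\text{-}\Mod^{\mathcal{G}_x}(\mathbbm{k}[S\circ\incl^\mathcal{G}_{\mathcal{G}_x}],\mathbbm{k}[S\circ\incl^\mathcal{G}_{\mathcal{G}_x}]).\]
The isomorphism of centers then follows at once, since a ring isomorphism carries center onto center. The conceptual reason is the same as in Lemma \ref{morita1}: because $\mathcal{G}$ is connected, $\incl^\mathcal{G}_{\mathcal{G}_x}$ is an equivalence of categories (Lemma 2.3 of \cite{Shi26+}). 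Precomposition with an equivalence therefore gives an equivalence $\mathbbm{k}$-$\Mod^{\mathcal{G}}\simeq\mathbbm{k}$-$\Mod^{\mathcal{G}_x}$. Any equivalence is fully faithful and respects composition, so it induces ring isomorphisms on endomorphism rings. We also note that $\mathbbm{k}[S]\circ\incl^\mathcal{G}_{\mathcal{G}_x}=\mathbbm{k}[S\circ\incl^\mathcal{G}_{\mathcal{G}_x}]$ on the nose, because both sides send $x$ to $\mathbbm{k}[S(x)]$ and $a\in\mathcal{G}_x$ to the linearization of $S(a)$.

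To keep the proof self-contained, I would verify full faithfulness directly rather than quote the abstract fact. The restriction map $\Phi$ sends a natural transformation $\eta=(\eta_y)_{y\in\mathcal{G}_0}$ to its component $\eta_x:\mathbbm{k}[S(x)]\to\mathbbm{k}[S(x)]$. Naturality of $\eta$ with respect to the morphisms in $\mathcal{G}_x$ makes $\eta_x$ a $\mathcal{G}_x$-equivariant map. The map $\Phi$ is clearly additive and multiplicative, since $(\eta\circ\theta)_x=\eta_x\circ\theta_x$, and it sends the identity to the identity.

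For the inverse, fix for each $y\in\mathcal{G}_0$ a morphism $g_y:x\to y$, which exists by connectedness, and take $g_x=\mathrm{id}_x$. Given a $\mathcal{G}_x$-equivariant $f:\mathbbm{k}[S(x)]\to\mathbbm{k}[S(x)]$, define
\[\eta_y:=\mathbbm{k}[S](g_y)\circ f\circ \mathbbm{k}[S](g_y^{-1}).\]
Injectivity of $\Phi$ comes from naturality: for any $\eta$, naturality along $g_y$ forces $\eta_y$ to have exactly this form with $f=\eta_x$. So $\eta$ is determined by $\eta_x$, and the same computation shows that this $\eta$ restricts back to $f$.

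The main step to check is that this $\eta$ is natural. Let $h:y\to z$. Then $g_z^{-1}hg_y\in\mathcal{G}_x$, so equivariance of $f$ gives
\[\mathbbm{k}[S](g_z^{-1}hg_y)\circ f=f\circ\mathbbm{k}[S](g_z^{-1}hg_y).\]
Conjugating this identity by $\mathbbm{k}[S](g_z)$ on the left and $\mathbbm{k}[S](g_y^{-1})$ on the right yields $\mathbbm{k}[S](h)\circ\eta_y=\eta_z\circ\mathbbm{k}[S](h)$, which is the required naturality. This also shows the construction does not depend on the choice of the $g_y$ beyond what naturality already forces. Hence $\Phi$ is a bijective ring homomorphism, and taking centers gives the theorem.
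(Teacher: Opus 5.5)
Your proof is correct. Its core is the same as the paper's: fix $g_y\colon x\to y$ with $g_x=\mathrm{id}_x$, extend a $\mathcal{G}_x$-equivariant map by conjugating with $\mathbbm{k}[S](g_y)$, and get naturality from equivariance under an element of $\mathcal{G}_x$. The paper does this with its two cube diagrams; you do it directly with $g_z^{-1}hg_y$.

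The difference is the level at which the argument is carried out. The paper defines its map $\phi$ directly between the two centers, which costs it two extra checks:
\begin{itemize}
\item It must verify separately that $\phi(\theta_x)$ is central.
\item Its surjectivity step ``$\phi(\eta_x)=\eta$'' tacitly requires $\eta_x$ to be central in the $\mathcal{G}_x$-endomorphism ring. That in turn needs every $\mathcal{G}_x$-equivariant endomorphism of $\mathbbm{k}[S(x)]$ to extend to a natural transformation. The paper's naturality check does supply this extension, since it never uses centrality, but the paper does not say so.
\end{itemize}
You instead prove that restriction is a ring isomorphism of the full endomorphism rings. Both centrality checks then become unnecessary, because a ring isomorphism carries center onto center. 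The extension property the paper leaves implicit is exactly your surjectivity of $\Phi$.

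So your route proves a stronger statement and makes the passage to centers purely formal. The paper gains nothing essential by working only with centers.
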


\begin{proof}
We set $S_x:=S\circ\incl^\mathcal{G}_{\mathcal{G}_x}$. Take a family of morphisms $\{g_y\in\mathcal{G}(x,y)\}_{y\in\mathcal{G}_0}$ such that $g_x=\id_x$. We define the map $\phi$ as follows:
\[\begin{array}{rccc}
\phi:&Z(\mathbbm{k}\textbf{-}\Mod^{\mathcal{G}_x}(\mathbbm{k}[S_x],\mathbbm{k}[S_x]))&\longrightarrow &
Z(\mathbbm{k}\textbf{-}\Mod^\mathcal{G}(\mathbbm{k}[S],\mathbbm{k}[S]))\\
 &\rotatebox{90}{$\in$}& &\rotatebox{90}{$\in$}\\
 &\theta_x&\longmapsto&
(\eta_\omega)_{\omega\in\mathcal{G}_0}.
\end{array}
\]
Here, $\eta_\omega$ is the map $\eta_\omega =S(g_\omega) \circ \theta_x \circ S(g_\omega)^{-1} $.\\

\noindent
$(1)$ Well-definedness of $\phi$\\
\noindent
For each $\{\tau_\omega\}_{\omega\in\mathcal{G}_0}\in \mathbbm{k}$-$\Mod^\mathcal{G}(\mathbbm{k}[S],\mathbbm{k}[S])$ and $w\in\mathcal{G}_0$, we obtain
\[\begin{aligned}
 \eta_\omega\circ\tau_\omega 
 &=S(g_\omega)\circ\theta_x\circ S(g_\omega)^{-1}\circ S(g_\omega)\circ\tau_x\circ S(g_\omega)^{-1}\\
 &=S(g_\omega)\circ\tau_x\circ S(g_\omega)^{-1}\circ S(g_\omega)\circ\theta_x\circ S(g_\omega)^{-1}\\
 &=\tau_\omega\circ \eta_\omega.
\end{aligned}\]
\noindent
For $x,y,z\in\mathcal{G}_0$, we construct the following bijections.
\[\begin{array}{rccc}
f:&\mathcal{G}_x&\longrightarrow &
\mathcal{G}(x,y)\\
 &\rotatebox{90}{$\in$}& &\rotatebox{90}{$\in$}\\
 &u&\longmapsto&g_yu
\end{array},
\]
\[\begin{array}{rccc}
f:&\mathcal{G}(x,y)&\longrightarrow &
\mathcal{G}(y,z)\\
 &\rotatebox{90}{$\in$}& &\rotatebox{90}{$\in$}\\
 &a&\longmapsto&g_za^{-1}
\end{array}.
\]
\noindent
For each $y\in\mathcal{G}_0$ and each morphism $g:x\to y$, there exists a morphism $u:x\to x$ such that $g=g_yu$. In the following diagram, since face $A$ and $B$ are commutative, the back face is also commutative, so we have a commutative diagram 
\begin{center}
\begin{tikzpicture}[
 >=latex,
 scale=1.18,
 transform shape,
 line width=0.5pt,
 every node/.style={ inner sep=2pt},
 lab/.style={fill=white, inner sep=1.2pt, font=\small}
]
 \node (A) at (0.0, 3.2) {$S(x)$};
 \node (B) at (7.0, 3.2) {$S(y)$};
 \node (C) at (3.2, 1.9) {$S(x)$};
 \node (D) at (0.0, 0.7) {$S(x)$};
 \node (E) at (7.0, 0.7) {$S(y)$};
 \node (F) at (3.2, -0.6) {$S(x)$};

 \node at (1.5, 1.3) {$A$};
 \node at (5.3, 1.3) {$B$};
 \node at (3.45, 2.85) {$\circlearrowleft$};
 \node at (3.45, 0.25) {$\circlearrowleft$};

 \draw[->] (A) -- node[lab, above]{$S(g)$} (B);
 \draw[->] (A) -- node[lab, pos=0.55, left]{$\theta_x$}  (D);
 \draw[->] (A) -- node[lab, pos=0.6, above, sloped]{$S(u)$}  (C);
 \draw[->] (C) -- node[lab, pos=0.5, above, sloped]{$S(g_y)$}   (B);
 \draw[->] (B) -- node[lab, right, xshift=4pt]{$\eta_y$} (E);
 \draw[->] (D) -- node[lab, pos=0.6, above]{$S(g)$} (E);
 \draw[->] (C) -- node[lab, pos=0.3, left]{$\theta_x$} (F);
 \draw[->] (D) -- node[lab, pos=0.45, below left]{$S(u)$} (F);
 \draw[->] (F) -- node[lab, pos=0.45, below right]{$S(g_y)$} (E);
\end{tikzpicture}.
\end{center}

We see that, for each $y,z\in\mathcal{G}_0$ and each morphism $g:y\to z$, there exists a morphism $a:x\to y$ such that $g=g_za^{-1}$. In the following diagram, since face $A$ and $B$ are commutative, the back face is also commutative, so we have a commutative diagram 
\begin{center}
\begin{tikzpicture}[
 >=latex,
 scale=1.18,
 transform shape,
 line width=0.5pt,
 every node/.style={ inner sep=2pt},
 lab/.style={fill=white, inner sep=1.2pt, font=\small}
]
 \node (A) at (0.0, 3.2) {$S(y)$};
 \node (B) at (7.0, 3.2) {$S(z)$};
 \node (C) at (3.2, 1.9) {$S(x)$};
 \node (D) at (0.0, 0.7) {$S(y)$};
 \node (E) at (7.0, 0.7) {$S(z)$};
 \node (F) at (3.2, -0.6) {$S(x)$};

 \node at (1.5, 1.3) {$A$};
 \node at (5.3, 1.3) {$B$};
 \node at (3.45, 2.85) {$\circlearrowleft$};
 \node at (3.45, 0.25) {$\circlearrowleft$};

 \draw[->] (A) -- node[lab, above]{$S(g)$} (B);
 \draw[->] (A) -- node[lab, pos=0.55, left]{$\eta_y$}  (D);
 \draw[->] (A) -- node[lab, pos=0.6, above, sloped]{$S(a^{-1})$}  (C);
 \draw[->] (C) -- node[lab, pos=0.5, above, sloped]{$S(g_z)$}   (B);
 \draw[->] (B) -- node[lab, right, xshift=4pt]{$\eta_z$} (E);
 \draw[->] (D) -- node[lab, pos=0.6, above]{$S(g)$} (E);
 \draw[->] (C) -- node[lab, pos=0.3, left]{$\theta_x$} (F);
 \draw[->] (D) -- node[lab, pos=0.45, below left]{$S(a^{-1})$} (F);
 \draw[->] (F) -- node[lab, pos=0.45, below right]{$S(g_z)$} (E);
\end{tikzpicture}.
\end{center}
Hence, $\{\eta_\omega\}_{\omega\in\mathcal{G}_0}$ is a natural transformation from $\mathbbm{k}[S]$ to $\mathbbm{k}[S]$.\\

\noindent
$(2)$ Injectivity of $\phi$\\
\noindent
Assume that $\phi(\theta_x)=\phi(\tau_x)$. Then, for each $w\in\mathcal{G}_0$, we obtain \[\phi(\theta_x)_\omega=S(g_\omega)\circ\theta_x\circ S(g_\omega)^{-1}=S(g_\omega)\circ\tau_x\circ S(g_\omega)^{-1}=\phi(\tau_x)_\omega.\]
Consequently, $\theta_x=\tau_x$ and $\phi$ is injective.\\

\noindent
$(3)$ Surjectivity of $\phi$\\
\noindent
For each $\eta\in Z(\mathbbm{k}$-$\Mod^\mathcal{G}(\mathbbm{k}[S],\mathbbm{k}[S]))$, we have $\phi(\eta_x)=\eta$. Thus, $\phi$ is surjective.\\

\noindent
$(4)$ The ring homomorphism property of $\phi$\\
\noindent
For each $\theta_x, \tau_x\in Z(\mathbbm{k}$-$\Mod^{\mathcal{G}_x}(\mathbbm{k}[S_x],\mathbbm{k}[S_x]))$ and $w\in \mathcal{G}_0$, we obtain 
\[\begin{aligned}
 \phi(\theta_x\circ\tau_x)_\omega &=S(g_\omega)\circ\theta_x\circ\tau_x\circ S(g_\omega)^{-1}\\
 &=S(g_\omega)\circ\theta_x\circ S(g_\omega)^{-1}\circ S(g_\omega)\circ\tau_x\circ S(g_\omega)^{-1}\\
 &=\phi(\theta_x)\circ \phi (\tau_x).
\end{aligned}\]
Thus, $\phi$ is a ring homomorphism. 

From, $(1)-(4)$, we have an isomorphism of rings \[Z(\mathbbm{k}\text{-}\Mod^{\mathcal{G}}(\mathbbm{k}[S],\mathbbm{k}[S]))\cong Z(\mathbbm{k}\text{-}\Mod^{\mathcal{G}_x}(\mathbbm{k}[S\circ\incl^\mathcal{G}_{\mathcal{G}_x}],\mathbbm{k}[S\circ\incl^\mathcal{G}_{\mathcal{G}_x}])).\]
\end{proof}

\begin{theorem}\label{pd}
If $\mathcal{G}\cong\coprod^n_{i=1}\mathcal{G}_{[i]}$ is the decomposition of $\mathcal{G}$ into its connected components, then we have an isomorphism of rings
\[Z(\mathbbm{k}\text{-}\Mod^{\mathcal{G}}(\mathbbm{k}[S],\mathbbm{k}[S]))\cong \prod_{x\in\pi_0(\mathcal{G})}Z(\mathbbm{k}\text{-}\Mod^{\mathcal{G}_x}(\mathbbm{k}[S\circ\incl^\mathcal{G}_{\mathcal{G}_x}],\mathbbm{k}[S\circ\incl^\mathcal{G}_{\mathcal{G}_x}])).\]
\end{theorem}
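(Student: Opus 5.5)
The proof has two steps. First, the endomorphism ring splits along the connected components of $\mathcal{G}$, and its center splits accordingly. Then Theorem \ref{center} is applied on each component. Throughout, $x$ ranges over a set of representatives of $\pi_0(\mathcal{G})$, and $\mathcal{G}_{[x]}$ denotes the component containing $x$.

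\textbf{Step 1: splitting the endomorphism ring.} Write $S_{[x]}:=S\circ\incl^\mathcal{G}_{\mathcal{G}_{[x]}}$. A natural transformation $\eta:\mathbbm{k}[S]\to\mathbbm{k}[S]$ is a family $(\eta_\omega)_{\omega\in\mathcal{G}_0}$. The naturality squares only involve morphisms $g:\omega\to\omega'$, and these always stay inside a single component. So $\eta$ is the same thing as a tuple of natural transformations $\eta|_{\mathcal{G}_{[x]}}:\mathbbm{k}[S_{[x]}]\to\mathbbm{k}[S_{[x]}]$, one for each component. Equivalently, restriction along the inclusions $\mathcal{G}_{[x]}\to\mathcal{G}$ gives an equivalence $\mathbbm{k}\text{-}\Mod^{\mathcal{G}}\simeq\prod_{x}\mathbbm{k}\text{-}\Mod^{\mathcal{G}_{[x]}}$. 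The map
\[\mathbbm{k}\text{-}\Mod^{\mathcal{G}}(\mathbbm{k}[S],\mathbbm{k}[S])\longrightarrow\prod_{x\in\pi_0(\mathcal{G})}\mathbbm{k}\text{-}\Mod^{\mathcal{G}_{[x]}}(\mathbbm{k}[S_{[x]}],\mathbbm{k}[S_{[x]}]),\qquad \eta\mapsto(\eta|_{\mathcal{G}_{[x]}})_x\]
is therefore bijective. It respects sums and composition, because both are computed objectwise, so it is a ring isomorphism.

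\textbf{Step 2: taking centers.} The center of a finite product of rings is the product of the centers, exactly as in the proof of Theorem \ref{gd}. This gives
\[Z(\mathbbm{k}\text{-}\Mod^{\mathcal{G}}(\mathbbm{k}[S],\mathbbm{k}[S]))\cong\prod_{x}Z(\mathbbm{k}\text{-}\Mod^{\mathcal{G}_{[x]}}(\mathbbm{k}[S_{[x]}],\mathbbm{k}[S_{[x]}])).\]
Each $\mathcal{G}_{[x]}$ is connected and contains $x$, and its isotropy group at $x$ is $\mathcal{G}_x$. Theorem \ref{center}, applied to the $\mathcal{G}_{[x]}$-set $S_{[x]}$, identifies each factor with $Z(\mathbbm{k}\text{-}\Mod^{\mathcal{G}_x}(\mathbbm{k}[S_{[x]}\circ\incl],\mathbbm{k}[S_{[x]}\circ\incl]))$. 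Inclusions compose, so $S_{[x]}\circ\incl^{\mathcal{G}_{[x]}}_{\mathcal{G}_x}=S\circ\incl^\mathcal{G}_{\mathcal{G}_x}$, which gives the stated product.

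\textbf{Main obstacle.} The only genuine point is Step 1: one must check that naturality imposes no conditions linking different components. This holds because $\mathcal{G}(\omega,\omega')=\emptyset$ whenever $\omega$ and $\omega'$ lie in different components. It should also be noted that the product is indexed by representatives of $\pi_0(\mathcal{G})$. Choosing different representatives gives isomorphic factors, since Theorem \ref{center} holds for every object of a connected groupoid. Everything else is bookkeeping.
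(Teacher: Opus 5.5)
Your proof is correct and follows essentially the same route as the paper. Restriction along the inclusions $\incl^{\mathcal{G}}_{\mathcal{G}_{[i]}}$ splits the center into a product over the connected components, and Theorem \ref{center} is then applied on each component; your version is more explicit than the paper's, which simply asserts that the restriction map on centers is a ring isomorphism, because you first split the full endomorphism ring, then pass to centers, and check that $S_{[x]}\circ\incl^{\mathcal{G}_{[x]}}_{\mathcal{G}_x}=S\circ\incl^{\mathcal{G}}_{\mathcal{G}_x}$.
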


\begin{proof}
Consider the map
\[\begin{array}{rccc}
f:&Z(\mathbbm{k}\text{-}\Mod^{\mathcal{G}}(\mathbbm{k}[S],\mathbbm{k}[S]))&\longrightarrow &
\displaystyle\prod^n_{i=1}Z(\mathbbm{k}\text{-}\Mod^{\mathcal{G}_{[i]}}(\mathbbm{k}[S\circ\incl^\mathcal{G}_{\mathcal{G}_{[i]}}],\mathbbm{k}[S\circ\incl^\mathcal{G}_{\mathcal{G}_{[i]}}]))\\
 &\rotatebox{90}{$\in$}& &\rotatebox{90}{$\in$}\\
 &\theta&\longmapsto&
(\theta\circ \incl^\mathcal{G}_{\mathcal{G}_{[i]}})^n_{i=1}.
\end{array}
\]
It follows from the construction that $f$ is a ring isomorphism. 
By Theorem \ref{center}, we have an isomorphism of rings
\[Z(\mathbbm{k}\text{-}\Mod^{\mathcal{G}}(\mathbbm{k}[S],\mathbbm{k}[S]))\cong \prod_{x\in\pi_0(\mathcal{G})}Z(\mathbbm{k}\text{-}\Mod^{\mathcal{G}_x}(\mathbbm{k}[S\circ\incl^\mathcal{G}_{\mathcal{G}_x}],\mathbbm{k}[S\circ\incl^\mathcal{G}_{\mathcal{G}_x}])).\]
\end{proof}

\section{Yoshida algebra for groupoids}
In this section, we introduce Yoshida algebras for finite groupoids and describe their relationship between the crossed Burnside ring of a groupoid and the groupoid algebra.
\begin{definition}
Let $\mathcal{G}\cong \coprod^n_{i=1}\mathcal{G}[i]$ be the decomposition of $\mathcal{G}$ into its connected components. For each $i\in \{1,\ldots,n\}$, fix a base object $x_i\in\mathcal{G}[i]_0$, identify $\mathcal{G}[i]$ with $\mathcal{G}_{x_i}\times \Pair\;{(\mathcal{G}[i]_0)}$ via Proposition \ref{gpdst} and write $\lambda_{x,y}$ for an element of $\mathcal{G}[i]_1$ corresponding to $(x,y)\in\Pair\;({\mathcal{G}[i]_0})_1$. In particular, $\lambda_{x,x}=\id_x$ and the composition satisfies $\lambda_{y,z}\lambda_{x,y}=\lambda_{x,z}$. Let $\Omega:\mathcal{G}\to \set$ be a functor defined by the following conditions: 
\begin{enumerate}
\item For an object $x\in \mathcal{G}_0$, define $\Omega (x):=\coprod_{H\le \mathcal{G}_x}\mathcal{G}_x/H,$
\item For a morphism $g:x\to y$, define $\Omega(g):\Omega(x)\to \Omega(y)$ by $\langle H,kH\rangle \mapsto \langle \lambda_{x,y}H\lambda_{x,y}^{-1}, gkH\lambda_{x,y}^{-1}\rangle$.
\end{enumerate}
\end{definition}

\begin{definition}
The ring $\mathbbm{k}$-$\Mod^{\mathcal{G}}(\mathbbm{k}[\Omega^2],\mathbbm{k}[\Omega^2])$ is called \textbf{Yoshida algebra}, and we denote it by $Y_\mathbbm{k}(\mathcal{G})$.
\end{definition}

\begin{remark}
The algebra $Y_\mathbbm{k}(G)$ of a finite group $G$ over $\mathbbm{k}$ was introduced by Yoshida in \cite{Yos83}, and was first referred to as a Yoshida algebra by Bouc in \cite{Bou97}.
\end{remark}

By combining the results on the cohomological Mackey algebra $\cou_\mathbbm{k}(G))$ in \cite{Rog15}, we obtain the following theorem.

\begin{theorem}\label{Yosiso}
If $G$ is a finite group, then $Z(\mathbbm{k}[G])\cong Z(Y_\mathbbm{k}(G))$.
\end{theorem}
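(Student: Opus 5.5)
We prove $Z(\mathbbm{k}[G])\cong Z(Y_\mathbbm{k}(G))$ for a finite group $G$ by passing through the cohomological Mackey algebra $\cou_\mathbbm{k}(G)$ studied in \cite{Rog15}, and showing that its center agrees with both $Z(\mathbbm{k}[G])$ and $Z(Y_\mathbbm{k}(G))$.

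\textbf{Step 1.} Recall Yoshida's theorem \cite{Yos83}, in the form used in \cite{Rog15}: the cohomological Mackey algebra is isomorphic to the Yoshida algebra,
\[\cou_\mathbbm{k}(G)\cong \mathrm{End}_{\mathbbm{k}G}\Big(\bigoplus_{H\le G}\mathbbm{k}[G/H]\Big),\]
where the sum runs over all subgroups of $G$. In the group case $\Omega=\coprod_{H\le G}G/H$, so the right-hand side is $\mathbbm{k}$-$\Mod^G(\mathbbm{k}[\Omega],\mathbbm{k}[\Omega])$. I have to check whether the paper's $\Omega^2$ is meant as this $\Omega$ (a notational index) or as $\Omega\times\Omega$. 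In the latter case I will use that $\mathbbm{k}[\Omega\times\Omega]$ again decomposes as a direct sum of permutation modules $\mathbbm{k}[G/K]$, and that every $K$ occurs, since the diagonal copies of $G/K$ lie inside $\Omega\times\Omega$. The two endomorphism algebras are then Morita equivalent, because each generator is a summand of the other, and Morita equivalent rings have isomorphic centers. So in either reading, $Z(Y_\mathbbm{k}(G))\cong Z(\cou_\mathbbm{k}(G))$.

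\textbf{Step 2.} Compute $Z(\cou_\mathbbm{k}(G))$ directly. Put $P=\bigoplus_{H\le G}\mathbbm{k}[G/H]$ and $E=\mathrm{End}_{\mathbbm{k}G}(P)$. Since $Z(\mathbbm{k}[G])$ acts on $P$ by $\mathbbm{k}G$-endomorphisms that commute with every $\mathbbm{k}G$-map, we get a ring homomorphism $\rho:Z(\mathbbm{k}[G])\to Z(E)$.
\begin{itemize}
\item \emph{Injectivity.} $P$ contains $\mathbbm{k}[G/1]=\mathbbm{k}G$ as a summand, so a central element acting as zero on $P$ acts as zero on $\mathbbm{k}G$, hence is zero.
\item \emph{Surjectivity.} Let $e\in E$ be the idempotent projecting onto the summand $\mathbbm{k}G$. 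Then $eEe\cong \mathrm{End}_{\mathbbm{k}G}(\mathbbm{k}G)\cong \mathbbm{k}G^{op}$. For central $z\in Z(E)$, the restriction $eze$ is central in $eEe$, so it corresponds to some $c\in Z(\mathbbm{k}G)$. For every $H$, the module $\mathbbm{k}[G/H]$ is a quotient of $\mathbbm{k}G$ via the canonical surjection $\pi_H:\mathbbm{k}G\to\mathbbm{k}[G/H]$, and $\pi_H$ is an element of $E$. Since $z$ commutes with $\pi_H$, it follows that $z$ acts on $\mathbbm{k}[G/H]$ by $c$. Hence $z=\rho(c)$.
\end{itemize}
This is exactly the argument in \cite{Rog15} showing $Z(\cou_\mathbbm{k}(G))\cong Z(\mathbbm{k}G)$. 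Note that it works over an arbitrary commutative ring $\mathbbm{k}$, because it uses only the surjections $\pi_H$ and never semisimplicity.

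\textbf{Step 3.} Compose the isomorphisms of Steps 1 and 2 to get $Z(\mathbbm{k}[G])\cong Z(Y_\mathbbm{k}(G))$.

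\textbf{Main obstacle.} The delicate point is Step 1: matching the paper's $\Omega^2$ with Yoshida's $\bigoplus_H \mathbbm{k}[G/H]$, and, if $\Omega\times\Omega$ is intended, justifying the Morita equivalence. If that identification becomes awkward, I would bypass $\cou_\mathbbm{k}(G)$ altogether and run Step 2 directly on $\mathbbm{k}[\Omega\times\Omega]$. That argument needs only two facts: that $\mathbbm{k}G$ is a direct summand of $\mathbbm{k}[\Omega\times\Omega]$ (the free orbit $G/1\times G/1$ contains $|G|$ copies of the regular orbit), and that every summand $\mathbbm{k}[G/K]$ is a quotient of $\mathbbm{k}G$.
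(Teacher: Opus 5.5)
Your proposal is correct and follows the same route as the paper, whose proof simply chains Yoshida's isomorphism between $\cou_{\mathbbm{k}}(G)$ and the endomorphism algebra of the permutation module on $\Omega$ (\cite[Theorem 2.12]{Rog15}) with $Z(\cou_{\mathbbm{k}}(G))\cong Z(\mathbbm{k}[G])$ (\cite[Proposition 3.4]{Rog15}). The differences are that you prove the center identification directly, via the summand $\mathbbm{k}G$ and the maps $\pi_H$, which is valid over any commutative $\mathbbm{k}$. Your Morita argument also makes the proof work whether $\Omega^2$ is read as $\Omega$ or as $\Omega\times\Omega$, a point the paper leaves entirely to the citation.
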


\begin{proof}
Combining the isomorphism $Z(\mathbbm{k}[G])\cong Z(\cou_\mathbbm{k}(G))$ from \cite[Proposition 3.4]{Rog15} and from $\cou_\mathbbm{k}(G)\cong \End_\mathbbm{k}(\Omega^2)$\cite[Theorem 2.12]{Rog15}, we conclude that $Z(\mathbbm{k}[G])\cong Z(Y_\mathbbm{k}(G))$.
\end{proof}

\begin{theorem}\label{yd1}
If $\mathcal{G}$ is connected and $x\in \mathcal{G}_0$, then two rings $Z(Y_\mathbbm{k}(\mathcal{G}))$ and $Z(Y_\mathbbm{k}(\mathcal{G}_x))$ are isomorphic.
\end{theorem}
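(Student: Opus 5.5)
The plan is to reduce the statement to Theorem \ref{center} applied to the $\mathcal{G}$-set $S:=\Omega^2=\Omega\times\Omega$. Theorem \ref{center} gives a ring isomorphism
\[Z(Y_\mathbbm{k}(\mathcal{G}))=Z(\mathbbm{k}\text{-}\Mod^{\mathcal{G}}(\mathbbm{k}[\Omega^2],\mathbbm{k}[\Omega^2]))\cong Z(\mathbbm{k}\text{-}\Mod^{\mathcal{G}_x}(\mathbbm{k}[\Omega^2\circ\incl^\mathcal{G}_{\mathcal{G}_x}],\mathbbm{k}[\Omega^2\circ\incl^\mathcal{G}_{\mathcal{G}_x}]))\]
for any $x\in\mathcal{G}_0$. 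What remains is to identify the restricted $\mathcal{G}_x$-set $\Omega^2\circ\incl^\mathcal{G}_{\mathcal{G}_x}$ with the $\mathcal{G}_x$-set $\Omega_{G}^2$ used for the group $G=\mathcal{G}_x$ in Yoshida's construction, namely $\Omega_G=\coprod_{H\le G}G/H$ with left multiplication. Restriction commutes with products, so it suffices to treat $\Omega\circ\incl^\mathcal{G}_{\mathcal{G}_x}$.

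First I would handle the case where $x$ is the chosen base object $x_i$ of its component. Here $\lambda_{x,x}=\id_x$, so for $g\in\mathcal{G}_x$ the definition gives $\Omega(g)\langle H,kH\rangle=\langle H,gkH\rangle$. This is exactly the left multiplication action on $\coprod_{H\le\mathcal{G}_x}\mathcal{G}_x/H$. Hence $\Omega\circ\incl^\mathcal{G}_{\mathcal{G}_x}=\Omega_{\mathcal{G}_x}$ on the nose, and therefore $\mathbbm{k}[\Omega^2\circ\incl]=\mathbbm{k}[\Omega_{\mathcal{G}_x}^2]$. The right-hand side above is then literally $Z(Y_\mathbbm{k}(\mathcal{G}_x))$.

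For an arbitrary $x$, I would compare with the base object. Since $\mathcal{G}$ is connected, $\mathcal{G}_x\cong\mathcal{G}_{x_i}$ via conjugation by $\lambda_{x_i,x}$. Transporting structure along this group isomorphism sends subgroups to subgroups and cosets to cosets, so it gives an isomorphism of rings $Y_\mathbbm{k}(\mathcal{G}_x)\cong Y_\mathbbm{k}(\mathcal{G}_{x_i})$, and hence of their centers. Combining this with the base-object case proves the theorem for every $x$.

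The main obstacle is checking that the identification of the restricted action is compatible with the somewhat unusual formula for $\Omega(g)$, in particular the twist $kH\lambda_{x,y}^{-1}$, when $x$ is not the base object. This is why I would route the argument through the base object, where $\lambda_{x,x}=\id_x$ makes the verification trivial. As an alternative, one could use the isomorphism $\mathcal{G}\cong\mathcal{G}_{x_i}\times\Pair(\mathcal{G}_0)$ of Proposition \ref{gpdst} to argue that the choice of object does not matter.
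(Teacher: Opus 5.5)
Your proposal is correct and is essentially the paper's argument, which simply cites Theorem \ref{center} with $S=\Omega^2$; you additionally spell out the identification of $\Omega^2\circ\incl^\mathcal{G}_{\mathcal{G}_x}$ with Yoshida's $\mathcal{G}_x$-set $\Omega_{\mathcal{G}_x}^2$, which the paper leaves implicit. Your detour through the base object $x_i$ and transport of structure is harmless but unnecessary, because the paper's convention gives $\lambda_{x,x}=\id_x$ for every object $x$, so $\Omega(g)\langle H,kH\rangle=\langle H,gkH\rangle$ holds directly for all $g\in\mathcal{G}_x$.
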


\begin{proof}
This follows from Theorem \ref{center}.
\end{proof}

\begin{theorem}\label{yd2}
Two rings $Z(Y_\mathbbm{k}(\mathcal{G}))$ and $\prod_{x\in\pi_0(\mathcal{G})}Z(Y_\mathbbm{k}(\mathcal{G}_x))$ are isomorphic.
\end{theorem}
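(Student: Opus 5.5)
The plan is to apply Theorem \ref{pd} with $S=\Omega^2$ and then compare each factor with the Yoshida algebra of the isotropy group. By definition $Y_\mathbbm{k}(\mathcal{G})=\mathbbm{k}$-$\Mod^{\mathcal{G}}(\mathbbm{k}[\Omega^2],\mathbbm{k}[\Omega^2])$. Theorem \ref{pd} then gives a ring isomorphism
\[Z(Y_\mathbbm{k}(\mathcal{G}))\cong \prod_{x\in\pi_0(\mathcal{G})}Z(\mathbbm{k}\text{-}\Mod^{\mathcal{G}_x}(\mathbbm{k}[\Omega^2\circ\incl^\mathcal{G}_{\mathcal{G}_x}],\mathbbm{k}[\Omega^2\circ\incl^\mathcal{G}_{\mathcal{G}_x}])).\]
Here $x$ runs over the chosen base objects $x_i$ of the components $\mathcal{G}[i]$.

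The first factorization step passes through the connected components. The map $\theta\mapsto(\theta\circ\incl^\mathcal{G}_{\mathcal{G}[i]})_i$ is a ring isomorphism of centers, because a natural transformation on a disjoint union of categories is exactly a family of natural transformations, one on each summand. On each connected $\mathcal{G}[i]$ we then apply Theorem \ref{center} with the base object $x_i$. Since $\Omega$ restricted to $\mathcal{G}[i]$ is the $\Omega$ built for $\mathcal{G}[i]$, this is equivalently Theorem \ref{yd1} applied componentwise.

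The remaining point, which I expect to be the main obstacle, is to identify $\Omega^2\circ\incl^\mathcal{G}_{\mathcal{G}_x}$ with the $\mathcal{G}_x$-set $\Omega_{\mathcal{G}_x}^2$ used to define $Y_\mathbbm{k}(\mathcal{G}_x)$ in the group case. On objects this is immediate: $\Omega(x)=\coprod_{H\le\mathcal{G}_x}\mathcal{G}_x/H$. On morphisms, take $g\in\mathcal{G}_x$. Because $\lambda_{x,x}=\id_x$, the definition of $\Omega(g)$ reduces to $\langle H,kH\rangle\mapsto\langle H,gkH\rangle$, which is the left translation action in Yoshida's $\Omega$ for the group $\mathcal{G}_x$. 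The square $\Omega^2$ is taken pointwise, so it restricts compatibly, and $\mathbbm{k}[-]$ also commutes with restriction along $\incl^\mathcal{G}_{\mathcal{G}_x}$.

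Consequently each factor equals $Z(Y_\mathbbm{k}(\mathcal{G}_x))$, and composing the isomorphisms yields $Z(Y_\mathbbm{k}(\mathcal{G}))\cong\prod_{x\in\pi_0(\mathcal{G})}Z(Y_\mathbbm{k}(\mathcal{G}_x))$. One detail still to check is that the product is independent of the choice of representative $x$ in each component. This holds because any two isotropy groups in the same component are conjugate via some $g:x\to y$, and conjugation by $g$ carries one Yoshida algebra isomorphically onto the other.
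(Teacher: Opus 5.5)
Your proposal is correct and takes the same route as the paper, which simply deduces the statement from Theorem~\ref{pd} applied to $S=\Omega^2$. Your additional check that $\Omega^2\circ\incl^{\mathcal{G}}_{\mathcal{G}_x}$ coincides with Yoshida's $\Omega^2$ for the group $\mathcal{G}_x$ (via $\lambda_{x,x}=\id_x$) makes explicit an identification that the paper leaves implicit.
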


\begin{proof}
This follows from Theorem \ref{pd}.
\end{proof}

\begin{theorem}\label{ZY}
If $\mathcal{G}$ is connected, then two rings $Z(\mathbbm{k}[\mathcal{G}])$ and $Z(Y_\mathbbm{k}(\mathcal{G}))$ are isomorphic.
\end{theorem}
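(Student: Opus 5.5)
The plan is to reduce the statement to the finite group case, Theorem \ref{Yosiso}, by passing through the isotropy group at a chosen object. Since $\mathcal{G}$ is connected, we fix an object $x\in\mathcal{G}_0$. The proof then consists of the chain of ring isomorphisms
\[
Z(\mathbbm{k}[\mathcal{G}])\cong Z(\mathbbm{k}[\mathcal{G}_x])\cong Z(Y_\mathbbm{k}(\mathcal{G}_x))\cong Z(Y_\mathbbm{k}(\mathcal{G})).
\]
The first isomorphism is Corollary \ref{morita}, which follows from the Morita equivalence of Lemma \ref{morita1}. The second is Theorem \ref{Yosiso}, applied to the finite group $G=\mathcal{G}_x$. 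The third is Theorem \ref{yd1}.

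The only step needing care is the third one. We must check that the restriction $\Omega\circ\incl^{\mathcal{G}}_{\mathcal{G}_x}$ of the groupoid functor $\Omega$ coincides with Yoshida's $\mathcal{G}_x$-set $\Omega$ for the group $\mathcal{G}_x$, namely $\coprod_{H\le\mathcal{G}_x}\mathcal{G}_x/H$ with its natural action.

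The object part agrees by definition. For a morphism $g\in\mathcal{G}_x$ we have $\lambda_{x,x}=\id_x$, so $\Omega(g)$ sends $\langle H,kH\rangle$ to $\langle H,gkH\rangle$. This is the usual left action on cosets, each $H$ being left fixed. Restriction commutes with forming products, so the restriction of $\Omega^2=\Omega\times\Omega$ is the group-theoretic $\Omega^2$ for $\mathcal{G}_x$. Restriction also commutes with taking permutation representations, so the restriction of $\mathbbm{k}[\Omega^2]$ is $\mathbbm{k}[\Omega^2]$ for $\mathcal{G}_x$. Hence the right-hand ring in Theorem \ref{center}, with $S=\Omega^2$, is precisely $Y_\mathbbm{k}(\mathcal{G}_x)$, and Theorem \ref{yd1} applies as stated.

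I expect the main obstacle to be this identification of $\Omega$ restricted to $\mathcal{G}_x$ with the classical object. Theorem \ref{Yosiso} is borrowed from Rognerud's description of the cohomological Mackey algebra as $\End(\Omega^2)$, and one must confirm that his $\Omega$ uses the same convention: all subgroups rather than conjugacy-class representatives, and left cosets under the left action. If his $\Omega$ is instead indexed over conjugacy-class representatives, the fallback is to note that the two $\mathcal{G}_x$-sets have the same orbit types. Permutation modules with the same summand types have Morita equivalent endomorphism rings, via the idempotent corner with the same additive closure. Morita equivalent rings have isomorphic centers, so the centers still agree.

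Once this check is done, composing the three isomorphisms proves that $Z(\mathbbm{k}[\mathcal{G}])\cong Z(Y_\mathbbm{k}(\mathcal{G}))$.
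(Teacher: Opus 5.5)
Your proposal is correct and follows essentially the paper's proof, which is the same chain $Z(\mathbbm{k}[\mathcal{G}])\cong Z(\mathbbm{k}[\mathcal{G}_x])\cong Z(Y_\mathbbm{k}(\mathcal{G}_x))\cong Z(Y_\mathbbm{k}(\mathcal{G}))$ obtained from Corollary \ref{morita}, Theorem \ref{Yosiso} and Theorem \ref{yd1}. Your extra check that $\Omega\circ\incl^{\mathcal{G}}_{\mathcal{G}_x}$ coincides with the group-level $\Omega$ for $\mathcal{G}_x$ (using $\lambda_{x,x}=\id_x$) is a detail the paper leaves implicit, and your check is correct.
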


\begin{proof}
If $x\in\mathcal{G}_0$, then $Z(\mathbbm{k}[\mathcal{G}])\cong Z(\mathbbm{k}[\mathcal{G}_x])$ by Corollary \ref{morita} and $Z(Y_\mathbbm{k}(\mathcal{G}))\cong Z(Y_\mathbbm{k}(\mathcal{G}_x))$ by Theorem \ref{yd1}. Since $Z(\mathbbm{k}[\mathcal{G}_x])\cong Z(Y_\mathbbm{k}(\mathcal{G}_x))$ by Theorem \ref{Yosiso}, we obtain $Z(\mathbbm{k}[\mathcal{G}])\cong Z(Y_\mathbbm{k}(\mathcal{G}))$.
\end{proof}

The same result holds even when the groupoid is not connected.
\begin{theorem} \label{isocenter}
Two rings $Z(\mathbbm{k}[\mathcal{G}])$ and $Z(Y_\mathbbm{k}(\mathcal{G}))$ are isomorphic.
\end{theorem}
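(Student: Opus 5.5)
The plan is to reduce to the connected case, which is settled by Theorem \ref{ZY}, using the product decompositions already available on both sides. Write $\mathcal{G}\cong\coprod^n_{i=1}\mathcal{G}_{[i]}$ for the decomposition into connected components and choose a base object $x_i$ in each $\mathcal{G}_{[i]}$.

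\textbf{Step 1: the group algebra side.} By Theorem \ref{gd},
\[Z(\mathbbm{k}[\mathcal{G}])\cong\prod_{i=1}^n Z(\mathbbm{k}[\mathcal{G}_{[i]}]).\]
Each $\mathcal{G}_{[i]}$ is connected, so Corollary \ref{morita} gives $Z(\mathbbm{k}[\mathcal{G}_{[i]}])\cong Z(\mathbbm{k}[\mathcal{G}_{x_i}])$. Hence
\[Z(\mathbbm{k}[\mathcal{G}])\cong\prod_{x\in\pi_0(\mathcal{G})}Z(\mathbbm{k}[\mathcal{G}_x]).\]

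\textbf{Step 2: the Yoshida side.} Theorem \ref{yd2} gives directly
\[Z(Y_\mathbbm{k}(\mathcal{G}))\cong\prod_{x\in\pi_0(\mathcal{G})}Z(Y_\mathbbm{k}(\mathcal{G}_x)).\]

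\textbf{Step 3: compare factors.} For each $x\in\pi_0(\mathcal{G})$, Theorem \ref{Yosiso}, applied to the finite group $\mathcal{G}_x$, gives $Z(\mathbbm{k}[\mathcal{G}_x])\cong Z(Y_\mathbbm{k}(\mathcal{G}_x))$. Taking the product of these isomorphisms over $\pi_0(\mathcal{G})$ and composing with Steps 1 and 2 yields $Z(\mathbbm{k}[\mathcal{G}])\cong Z(Y_\mathbbm{k}(\mathcal{G}))$. Equivalently, one can apply Theorem \ref{ZY} to each connected component $\mathcal{G}_{[i]}$ and then use the two product decompositions.

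\textbf{Main obstacle.} The only point needing care is Step 2. One must check that the restriction of $\Omega^2$ along $\incl^\mathcal{G}_{\mathcal{G}_x}$ is the functor $\Omega^2$ built for the group $\mathcal{G}_x$. Then the factor appearing in Theorem \ref{pd} really is $Z(Y_\mathbbm{k}(\mathcal{G}_x))$ in the sense of Yoshida's original definition, so that Theorem \ref{Yosiso} applies. On objects, $\Omega(x)=\coprod_{H\le\mathcal{G}_x}\mathcal{G}_x/H$. For $g\in\mathcal{G}_x$ we have $\lambda_{x,x}=\id_x$, so $\Omega(g)$ sends $\langle H,kH\rangle\mapsto\langle H,gkH\rangle$. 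This is exactly the left multiplication action defining Yoshida's $G$-set $\Omega$ for $G=\mathcal{G}_x$. Since $\Omega^2$ is formed pointwise, the same holds for $\Omega^2$. With this verified, the chain of isomorphisms in Steps 1--3 completes the proof.
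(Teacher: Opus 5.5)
Your proof is correct and is essentially the paper's argument: you decompose both centers over connected components and compare factors. The only difference is that you pass directly to the isotropy groups via Corollary \ref{morita} and Theorem \ref{yd2} and then invoke Theorem \ref{Yosiso}, whereas the paper applies Theorem \ref{ZY} to each component $\mathcal{G}_{[i]}$, which is itself proved by exactly your Steps 1--3. Your check that $\Omega^2$ restricted to $\mathcal{G}_x$ is Yoshida's $\mathcal{G}_x$-set (using $\lambda_{x,x}=\id_x$) is also correct, and it makes explicit a point the paper leaves implicit.
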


\begin{proof}
If $\mathcal{G}\cong\coprod^n_{i=1}\mathcal{G}_{[i]}$ is the decomposition of $\mathcal{G}$ into its connected components, then by Theorem \ref{gd} and Theorem \ref{pd}, we obtain the isomorphism $Z(Y_\mathbbm{k}(\mathcal{G}))\cong 
\prod^n_{i=1}Z(Y_\mathbbm{k}({\mathcal{G}}_{[i]}))$ and 
$Z(\mathbbm{k}[\mathcal{G}])\cong \prod^n_{i=1}Z(\mathbbm{k}[\mathcal{G}_{[i]}])$ respectively. 
By Theorem \ref{ZY}, there exists a ring isomomorphism
$Z(Y_\mathbbm{k}(\mathcal{G}_{[i]}))\to Z(\mathbbm{k}[\mathcal{G}_{[i]}])$, which we denote by $\phi_i$.
Consider the map
\[\begin{array}{rccc}
f:&\prod^n_{i=1}Z(Y_\mathbbm{k}({\mathcal{G}}_{[i]}))&\longrightarrow &
\prod^n_{i=1}Z(\mathbbm{k}[\mathcal{G}_{[i]}])\\
 &\rotatebox{90}{$\in$}& &\rotatebox{90}{$\in$}\\
 &(x_i)^n_{i=1}&\longmapsto&
(\phi_i(x_i))^n_{i=1}.
\end{array}
\]
It follows from the construction that $f$ is a ring isomorphism. Therefore, two rings $Z(\mathbbm{k}[\mathcal{G}])$ and $Z(Y_\mathbbm{k}(\mathcal{G}))$ are isomorphic.
\end{proof}

\begin{theorem}\label{isocenter2}
There exists a surjective ring homomorphism from $B_\mathbbm{k}^c(\mathcal{G})$ to $Z(\mathbbm{k}[\mathcal{G}])$.
\end{theorem}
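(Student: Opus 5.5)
The plan is to reduce to the connected case, where Theorem \ref{stt} already gives the result, and then to assemble the pieces using the product decompositions of both rings.

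\textbf{Step 1 (decompose both rings).} Let $\mathcal{G}\cong\coprod^n_{i=1}\mathcal{G}_{[i]}$ be the decomposition of $\mathcal{G}$ into its connected components. By Theorem \ref{gd} we have a ring isomorphism
\[Z(\mathbbm{k}[\mathcal{G}])\cong\prod^n_{i=1}Z(\mathbbm{k}[\mathcal{G}_{[i]}]).\]
For the crossed Burnside ring, Theorem \ref{cd} gives $B^c(\mathcal{G},\mathcal{G}^c)\cong\prod_{x\in\pi_0(\mathcal{G})}B^c(\mathcal{G}_x)$. Since $\mathbbm{k}\otimes_{\mathbbm{Z}}-$ commutes with finite products, this yields
\[B^c_\mathbbm{k}(\mathcal{G})\cong\prod^n_{i=1}B^c_\mathbbm{k}(\mathcal{G}_{x_i}),\]
where $x_i$ is a chosen base object of $\mathcal{G}_{[i]}$. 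Applying Theorem \ref{cd} to the connected groupoid $\mathcal{G}_{[i]}$ itself gives $B^c_\mathbbm{k}(\mathcal{G}_{[i]})\cong B^c_\mathbbm{k}(\mathcal{G}_{x_i})$, so we may equally write $B^c_\mathbbm{k}(\mathcal{G})\cong\prod_i B^c_\mathbbm{k}(\mathcal{G}_{[i]})$.

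\textbf{Step 2 (componentwise maps).} For each $i$, Theorem \ref{stt}, applied to the connected groupoid $\mathcal{G}_{[i]}$, supplies a surjective ring homomorphism $\psi_i:B^c_\mathbbm{k}(\mathcal{G}_{[i]})\to Z(\mathbbm{k}[\mathcal{G}_{[i]}])$. Concretely, $\psi_i$ is the composite of three maps: the isomorphism $B^c_\mathbbm{k}(\mathcal{G}_{[i]})\cong B^c_\mathbbm{k}(\mathcal{G}_{x_i})$; the map of Lemma \ref{hom}(2), $[H,a]\mapsto\sum_{y\in[\mathcal{G}_{x_i}/H]}yay^{-1}$; and the inverse of the isomorphism $Z(\mathbbm{k}[\mathcal{G}_{[i]}])\cong Z(\mathbbm{k}[\mathcal{G}_{x_i}])$ of Corollary \ref{morita}.

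\textbf{Step 3 (assemble).} Define $\psi:\prod_iB^c_\mathbbm{k}(\mathcal{G}_{[i]})\to\prod_iZ(\mathbbm{k}[\mathcal{G}_{[i]}])$ by $(b_i)_i\mapsto(\psi_i(b_i))_i$. This is a ring homomorphism because it acts coordinatewise, and it is surjective because each coordinate map is surjective. Composing $\psi$ with the isomorphisms from Step 1 gives the desired surjective ring homomorphism $B^c_\mathbbm{k}(\mathcal{G})\to Z(\mathbbm{k}[\mathcal{G}])$.

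\textbf{Main obstacle.} The delicate point is that all the isomorphisms used must be \emph{ring} isomorphisms, in particular unital and multiplicative. This concerns mainly the Morita step in Corollary \ref{morita}, since centers of Morita equivalent rings are isomorphic as rings via the identification with the center of the module category, and this should be stated explicitly. It also concerns Theorem \ref{cd} after base change to $\mathbbm{k}$. If the Morita isomorphism is questionable, I would instead use the explicit isomorphism $Z(\mathbbm{k}[\mathcal{G}_{[i]}])\cong Z(\mathbbm{k}[\mathcal{G}_{x_i}])$ given by restricting a central element to its $\mathcal{G}_{x_i}$-component. Its inverse transports $c$ to $\sum_y g_y c g_y^{-1}$, with $g_y\in\mathcal{G}(x_i,y)$ chosen as in Theorem \ref{center}, and this can be checked directly to be a ring isomorphism.
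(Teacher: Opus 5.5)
Your proposal is correct and is essentially the paper's proof. Both decompose $B^c_\mathbbm{k}(\mathcal{G})$ and $Z(\mathbbm{k}[\mathcal{G}])$ along connected components via Theorems~\ref{cd} and~\ref{gd}, apply Theorem~\ref{stt} to each component, and take the coordinatewise product map. Your extra remarks only make explicit points the paper passes over quickly: applying Theorem~\ref{cd} to each component to get $B^c_\mathbbm{k}(\mathcal{G}_{[i]})\cong B^c_\mathbbm{k}(\mathcal{G}_{x_i})$, and checking that the Morita step gives a ring isomorphism of centers, e.g.\ via $c\mapsto\sum_y g_y c g_y^{-1}$.
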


\begin{proof}
If $\mathcal{G}\cong\coprod^n_{i=1}\mathcal{G}_{[i]}$ is the decomposition of $\mathcal{G}$ into its connected components, then by Theorem \ref{gd} and Theorem \ref{cd}, we obtain the isomorphism  $B^c_\mathbbm{k}(\mathcal{G})\cong 
\prod^n_{i=1}B^c_\mathbbm{k}({\mathcal{G}}_{[i]})$ and 
$Z(\mathbbm{k}[\mathcal{G}])\cong \prod^n_{i=1}Z(\mathbbm{k}[\mathcal{G}_{[i]}])$ respectively. 
By Lemma \ref{stt}, there exists a surjective ring homomorphism
$B^c_\mathbbm{k}(\mathcal{G}_{[i]})\to Z(\mathbbm{k}[\mathcal{G}_{[i]}])$, which we denote by $\phi_i$.
Consider the map
\[\begin{array}{rccc}
f:&\prod^n_{i=1}B^c_\mathbbm{k}({\mathcal{G}}_{[i]})&\longrightarrow &
\prod^n_{i=1}Z(\mathbbm{k}[\mathcal{G}_{[i]}])\\
 &\rotatebox{90}{$\in$}& &\rotatebox{90}{$\in$}\\
 &(x_i)^n_{i=1}&\longmapsto&
(\phi_i(x_i))^n_{i=1}.
\end{array}
\]
It follows from the construction that $f$ is a surjective ring homomorphism. 
Therefore, there exists a surjective ring homomorphism from $B_\mathbbm{k}^c(\mathcal{G})$ to $Z(\mathbbm{k}[\mathcal{G}])$.
\end{proof}

\begin{corollary}
There exists a surjective ring homomorphism from $B_\mathbbm{k}^c(\mathcal{G})$ to $Z(Y_\mathbbm{k}(\mathcal{G}))$.
\end{corollary}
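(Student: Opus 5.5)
The corollary follows by composing two results already established. By Theorem \ref{isocenter2} there is a surjective ring homomorphism
\[\Phi : B^c_\mathbbm{k}(\mathcal{G}) \longrightarrow Z(\mathbbm{k}[\mathcal{G}]).\]
By Theorem \ref{isocenter} there is a ring isomorphism
\[\Psi : Z(\mathbbm{k}[\mathcal{G}]) \longrightarrow Z(Y_\mathbbm{k}(\mathcal{G})).\]
Taking the inverse of the isomorphism if necessary, the direction of $\Psi$ is immaterial. I would then set $\Psi\circ\Phi$. It is a ring homomorphism because it is a composite of ring homomorphisms. It is surjective because $\Phi$ is surjective and $\Psi$ is bijective.

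The one point I would check is that the two intermediate rings really are the same object. Theorem \ref{isocenter2} and Theorem \ref{isocenter} must both refer to $Z(\mathbbm{k}[\mathcal{G}])$ for the same finite groupoid $\mathcal{G}$ over the same commutative ring $\mathbbm{k}$. Inspecting the statements, they do, so the composite is defined.

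If one wants a more explicit map, the construction can be done componentwise. Decompose $\mathcal{G}\cong\coprod_{i=1}^n\mathcal{G}_{[i]}$. On each component, compose the surjection of Theorem \ref{stt} with the isomorphism of Theorem \ref{ZY}. Then assemble these maps through the product decompositions of Theorem \ref{cd}, Theorem \ref{gd} and Theorem \ref{pd}. A product of surjective ring homomorphisms is again a surjective ring homomorphism, so this yields the same conclusion.

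There is no real obstacle here. The only care needed is to keep the directions of the isomorphisms consistent, so that the composite has domain $B^c_\mathbbm{k}(\mathcal{G})$ and codomain $Z(Y_\mathbbm{k}(\mathcal{G}))$.
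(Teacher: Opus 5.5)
Your proposal is correct and matches the paper's proof, which likewise composes the surjection $B^c_\mathbbm{k}(\mathcal{G})\to Z(\mathbbm{k}[\mathcal{G}])$ of Theorem \ref{isocenter2} with the ring isomorphism $Z(\mathbbm{k}[\mathcal{G}])\cong Z(Y_\mathbbm{k}(\mathcal{G}))$ of Theorem \ref{isocenter}. Your componentwise variant is the same argument unwound through the connected-component decompositions, so it is also fine but adds nothing essential.
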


\begin{proof}
This follows from Theorem \ref{isocenter} and Theorem \ref{isocenter2}.
\end{proof}

\section*{Acknowledgement}
The author sincerely thanks his supervisor, Professor Fumihito Oda, for his support and valuable comments.

\end{document}